\newtheorem{theorem}{Theorem}[section]
\newtheorem{lemma}[theorem]{Lemma}
\newtheorem{note}[theorem]{Note}
\newtheorem{prop}[theorem]{Proposition}
\newtheorem*{Theorem1'}{Theorem 1'}
\theoremstyle{definition}
\newtheorem{definition}[theorem]{Definition}
\theoremstyle{remark}
\numberwithin{equation}{section}
\newcommand \C{{\mathbb C}}
\renewcommand \O{{\mathcal O}}
\newcommand \R{{\mathcal R}}
\renewcommand \ker{{\mathrm {ker}}}
\newcommand \Hom{{\mathrm {Hom}}}
\newcommand \Irr{{\mathrm {Irr}}}
\newcommand \Top{{{Top}}}
\newcommand \Bot{{{Bot}}}
\newcommand \GL{{\mathrm {GL}}}
\newcommand \Sp{{\mathrm {Sp}}}
\newcommand \U{{\mathrm {U}}}
\newcommand \SU{{\mathrm {SU}}}
\newcommand \SL{{\mathrm {SL}}}
\newcommand \m{{\mathfrak {m}}}
\newcommand \n{{\mathfrak {n}}}
\renewcommand \i{{\mathfrak {i}}}
\renewcommand \j{{\mathfrak {j}}}
\renewcommand \r{{\mathfrak {r}}}
\begin{document}

\title[Weil representations of unitary groups]{Weil representations of unitary groups over ramified extensions of finite local rings with odd nilpotency length}

\author[A. Herman]{Allen Herman$^*$}
\address{Department of Mathematics and Statistics, University of Regina, Regina, Canada, S4S 0A2}
\email{Allen.Herman@uregina.ca}
\author[M. Shau]{Moumita Shau}
\address{Department of Mathematics and Statistics, University of Regina, Regina, Canada, S4S 0A2}
\email{tuktukishau@gmail.com}
\author[F. Szechtman]{Fernando Szechtman$^*$}
\address{Department of Mathematics and Statistics, University of Regina, Regina, Canada, S4S 0A2}
\email{fernando.szechtman@gmail.com}

\thanks{$^*$These authors were supported by NSERC Discovery Grants.}
\date{}

\keywords{Weil representations, unitary groups, local rings}

\subjclass[2000]{Primary: 20G05 Secondary: 20C15, 11E39, 11E57 }

\begin{abstract}
We find the irreducible decomposition of the Weil representation
of the unitary group $\U_{2n}(A)$, where $A$ is a ramified quadratic extension
of a finite, commutative, local, principal ideal ring $R$ and the nilpotency degree of
the maximal ideal of $A$ is odd. We show in particular that this
Weil representation is multiplicity free. Restriction to the
special unitary group $\SU_{2n}(A)$ preserves irreducibility and
multiplicity freeness provided $n>1$.
\end{abstract}

\maketitle

\section{Introduction}

Weil representations of unitary groups over finite fields were
first considered by G\'erardin \cite{G}, who gave an explicit formula
for the Weil character as well as its decomposition into
irreducible constituents. Since then, Weil representations of
unitary and special unitary groups over finite fields have
attracted considerable attention. For instance, Tiep \cite{T} studies when the Weil
constituents lead to globally irreducible representations; he also finds some Schur indices
and provides a branching formula for the Weil character; Tiep
and Zalesskii \cite{TZ} show how to determine the Weil components of
$\SU_n(q)$ by their restrictions to standard subgroups; Hiss and
Malle \cite{HM} as well as Guralnick,  Magaard, Saxl and Tiep
\cite{GMST} show that the Weil
constituents and the trivial module are the only irreducible
modules of $\SU_n(q)$ having ``small" degree; Hiss and Zalesskii
\cite{HZ} decompose the tensor product of the Weil and Steinberg
characters of $\U_n(q)$ and prove this decomposition to be
multiplicity free; Pallikaros and Zalesskii \cite{PZ} show that
the restriction of the Weil representation of $\U_n(q)$ to certain
subgroups is multiplicity free, while Pallikaros and Ward
\cite{PW} find necessary conditions for other types of subgroups
to have this same property.

Weil representations of unitary groups over an unramified
extension of a finite, commutative, local, principal ideal ring were considered by
Gow and Szechtman \cite{GS}, who gave a formula for the Weil
character which does not depend on G\'erardin's formula for the field
case. The decomposition of the Weil character into irreducible
constituents was later obtained in~\cite{S}.

Weil representations of unitary groups over certain noncommutative rings were constructed by
Guti\'errez, Pantoja and Soto-Andrade \cite{GPS}, by first giving
a presentation for such groups and then assigning suitable linear
operators to the generators in such a way that the defining
relations were preserved.

In this article we deal with Weil representations of unitary groups over suitable rings, as follows.
Let $\O$ be a local PID that is not a field having maximal ideal $\O s$ and finite residue field $F_q$ of odd characteristic,
and let $\R=\O[X]/(X^2-s)$ be a ramified quadratic extension of~$\O$. We have  $\R=\O\oplus \O t$, where $1,t$ is an $\O$-basis of $\R$ and $t^2=s$. The ring $\R$ is also a local PID with maximal ideal $\R t=\O s\oplus \O t$ and residue field $F_q$. Moreover, $\R$
has an involution $\sigma$ that fixes every element of $\O$ and sends $t$ to $-t$.

For $m\geq 1$, we set $A=\R/\R t^{m}$ and $R=\O/\O s^{\lceil m/2\rceil}$. Note that $A$ (resp. $R$) is
a finite, commutative, local, principal ideal ring with maximal ideal $\r=A\pi$ (resp. $\m=Rp$), where $\pi$ (resp. $p$) is image of~$t$ (resp. $s$) in $A$ (resp. $R$) and residue field~$F_q$. Observe also that $R$ imbeds into $A=R\oplus R\pi$ and that $\m=\r\cap R$.

Note as well that $\sigma$ induces an involution, say $*$, on $A$
which fixes every element of $R$ and sends $\pi$ to $-\pi$. The fixed ring of $*$ is $R$. Setting
$$
S=\{a\in A\,|\, a^*=-a\},
$$
we have
$$
A=R\oplus S\text{ and }S=R\pi.
$$
Observe  that $a-a^*\in\r$ for all $a\in A$, so the involution that $*$ induces on $A/\r$ is trivial.

Let $h:V\times V\to A$ be a nondegenerate hermitian or skew hermitian form defined on a nonzero free $A$-module $V$ of finite rank $r$,
and let $\U_r(A)$ be the associated unitary group. We wish to embed $\U_r(A)$ into a symplectic group $\Sp(V,f)$, where $f:V\times V\to R$ is a nondegenerate alternating form arising from $h$, in order to study the restriction of the Weil representation of $\Sp(V,f)$ to $\U_r(A)$.

Suppose first $m=2\ell$ is even. Then $A$ is a free $R$-module with basis $1,\pi$. We take $h$ to be hermitian. Given $u,v\in V$, we have
$$
h(u,v)=k(u,v)+f(u,v)\pi
$$
for unique $k(u,v),f(u,v)\in R$, and \cite[\S 3]{HS} shows that $f$ is indeed nondegenerate. Herman and Szechtman \cite{HS} studied the Weil representation of $\U_r(A)$ by means of its imbedding into $\Sp_{2r}(R)$.
They obtained a decomposition of the Weil module into irreducible constituents as well as a description of each constituent
by means of Clifford theory with respect to the largest abelian congruence subgroup of $\U_r(A)$. Their study required
auxiliary material on hermitian forms, unitary groups and their actions on hermitian spaces, all of which was developed in \cite{CHQS}
for this purpose.

In this paper we consider the last remaining case, namely the case when $m=2\ell-1$ is odd. In this case,
neither $A$ nor $V$ are free $R$-modules, as seen \S\ref{riform}, except in the special case $\ell=1$ when $A=R$. We take $h$ to be skew hermitian. Given $u,v\in V$, we have
$$
h(u,v)=f(u,v)+k(u,v)\pi
$$
for a unique $f(u,v)\in R$ and some $k(u,v)\in R$. As shown in \S\ref{riform}, $f$ is in fact nondegenerate. Virtually none of
the tools from \cite{CHQS} are now available and this prompted the development of \cite{CS} in order to provide the subsidiary material
on skew hermitian geometry and their corresponding unitary groups required to study the Weil representation.
In particular, the rank of $V$ is now forced to be even, say $r=2n$, as shown in \cite[Proposition 2.12]{CS}.

The goal of this paper is to study the
Weil module of $\U_{2n}(A)$ by means of its imbedding
into $\Sp(V,f)$ and to obtain its decomposition into
irreducible submodules. It turns out that this Weil module is multiplicity free and, much
as in the symplectic case studied by Cliff, McNeilly and Szechtman
\cite{CMS1}, the Weil module has a top layer and a bottom layer,
the constituents of the top layer are the various eigenspaces for
the action of the center of $\U_{2n}(A)$, and the bottom layer is
a Weil module for a unitary group $\U_{2n}(\overline{A})$, where
$\overline{A}$ is a quotient of~$A$. The degrees of the irreducible Weil
components are determined. All of these constituents
remain irreducible and non-isomorphic to each other when
restricted to $\SU_{2n}(A)$ provided $n>1$. In spite
of the fact that $V$ is not a free $R$-module, we are able to obtain a
decomposition of the Weil module of $\Sp(V,f)$ into $2\ell$
non-isomorphic modules, a case which falls outside of the study
made in \cite{CMS1}.

It should be noted that while in \cite{S} and \cite{HS} the irreducible constituents of the top layer of the Weil
representation do not have the same degrees, the opposite is true here. Moreover, in \cite{HS} there are two choices of nondegenerate hermitian form, while only one nondegenerate skew hermitian form
is possible in this paper. One of the choices yields a Weil representation of $\U_2(A)$ whose top layer has fewer irreducible constituents
than the other choice. Nothing like this happens in this paper. Furthermore, in the present paper $\U_{2n}(A)$ is an extension of the symplectic group $\Sp_{2n}(q)$ by a maximal congruence subgroup, whereas in \cite{HS} (resp. \cite{GS})
the unitary group $\U_n(A)$ is an extension of
an orthogonal group $\mathrm{O}_n(q)$ (resp. the unitary group $\U_n(q)$). 

The paper is organized as follows. Background material from various sources is collected in \S\ref{riform} and \S\ref{hes}. Orbit enumeration required for the Weil decomposition is carried out in \S\ref{ors}, while the decomposition itself is presented in \S\ref{dec}. The degrees
of the irreducible constituents can be found in \S\ref{degf}. The final section gives a decomposition of the Weil representation of $\Sp(V,f)$,
where $V$ is not a free $R$-module.

All representations are assumed to be finite dimensional over the
complex numbers.

\section{Rings and forms}\label{riform}

We maintain throughout the above notation with $m=2\ell-1$ odd. Observe that the annihilator of $\pi$ in $R$ is~$Rp^{\ell-1}$. Now $\pi p^{\ell-1}=\pi^{2\ell-1}=0$ and $A=R\oplus R\pi$, so $Rp^{\ell-1}=A\pi^{2(\ell-1)}$ is the minimal ideal of both $R$ and $A$, and will be denoted by $\n$.

The nilpotency degree of $\r$ is $2\ell-1$ and that of $\m$ is $\ell$, so that $|A|=q^{2\ell-1}$ and $|R|=q^\ell$. In particular, either $\ell=1$ and $A=F_q=R$, or else $\ell>1$ and $A$ is not a free $R$-module.

An alternative construction of $(A,*)$ is to start off with $R$ as above, so that $R$ is a finite, commutative, local, principal ideal ring with maximal ideal $\m=R p$ having nilpotency degree~$\ell\geq 1$ and residue field $F_q$ of odd characteristic, and then set
$$A=R[X]/(X^2-p, X^{2\ell-1}).$$ Then $R$ imbeds into $A=R\oplus R\pi$, where $\pi$ is the image of $X$ in $A$,
the annihilator of $\pi$ in $R$ is $Rp^{\ell-1}$, and $\pi^2=p$. The involution of $R[X]$ that fixes every element of $R$
and sends $X$ to $-X$ induces an involution $*$ on $A$.

We have a group homomorphism, called the norm map, $A^\times\to R^\times$, given by $a\mapsto aa^*$. Its kernel will be denoted by $N$.
Thus
$$
N=\{a\in A^\times\,|\, aa^*=1\}.
$$
Arguing as in \cite[Lemmas 7.4 and 7.5]{HS}, we see that
\begin{equation}\label{norman}
|N|=2|A|/|R|.
\end{equation}

Recall that $V$ be a nonzero free $A$-module of finite rank $r$ and that $h:V\times V\to A$ is a nondegenerate
skew hermitian form. The latter means that $h$ is linear in the second variable, $*$-linear in the first, $h(v,u)=-h(u,v)^*$
for all $u,v\in V$, and the $A$-linear map $V\to V^*=\Hom_A(V,A)$, $u\mapsto h(u,-)$, is a monomorphism. Here $V^*$ is viewed as an $A$-module via $(a\alpha)(v)=a^*\alpha(v)$. Since $A$ has a unique minimal ideal, we deduce the existence of a linear character $A^+\to\C^*$ having no nonzero
ideals on its kernel. Then the first half of the proof of \cite[Lemma 2.1]{CMS2} applies (just replace $R$ by $A$ and $[~,~]$ by $h$)
to show that $V\to V^*$ is an isomorphism. It follows from \cite[Proposition 2.12]{CS} that $r=2n$ is even and $V$ has a ``symplectic" basis, i.e., a basis $u_1,\dots,u_n,v_1,\dots,v_n$ satisfying
\begin{equation}\label{symbas}
h(u_i,v_j)=\delta_{ij},\; h(u_i,u_j)=h(v_i,v_j)=0.
\end{equation}
In particular, the isomorphism type of the corresponding unitary group
$$
\U=\U(V,h)=\{g\in\GL_A(V)\,|\, h(gu,gv)=h(u,v)\text{ for all }u,v\in V\}
$$
does not depend on the choice of $h$.

Identifying each $a\in N$ with $a\cdot 1_V$, we can view $N$ as a central subgroup of $\U$.

Recalling the meaning of $f$, we see that $f:V\times V\to R$ is given by
$$
f(v,w)=\frac{h(v,w)+h(v,w)^*}{2},\quad v,w\in V.
$$
We readily verify that $f$ is an alternating $R$-bilinear form on $V$, which is now viewed as an $R$-module. We refer to $f$ as the alternating form associated to $h$.

We claim that $f$ is nondegenerate, in the sense that the associated $R$-linear map $V\to V^*=\Hom_R(V,R)$, $v\mapsto f(v,-)$,
is a monomorphism. Indeed, let $v$ be a nonzero element of $V$. Suppose, if possible, that $f(v,V)=0$. Then $h(v,V)\subseteq S=R\pi$. By
the nondegeneracy of $h$, there is $u$ in $V$ such that $h(v,u)\neq 0$. Since $h(v,u)\in R\pi$, we have $h(v,u)=r\pi^{2i-1}$ for some $r\in R^\times$ and $1\leq i<\ell$. Set $w=\pi u$. Then $h(v,w)=r\pi^{2i}=rp^i$, so $f(v,w)=rp^i$, which is nonzero because $r\in R^\times$ and $i<\ell$. This contradiction proves that $V\to V^*$ is injective. That $V\to V^*$ is also surjective is proven in \cite[Lemma 2.1]{CMS2}.

Now every $g\in\U$ is naturally an invertible $R$-linear map from $V$ to~$V$ preserving~$f$, so $\U$ is a subgroup of the symplectic group
$$
\Sp=\Sp(V,f)=\{g\in\GL_R(V)\,|\, f(gu,gv)=f(u,v)\text{ for all }u,v\in V\}.
$$
It is worth noting that if $\ell>1$ then $V$ is not a free $R$-module. Indeed, let $e_1,\dots,e_{2n}$ be an $A$-basis of $V$.
Then
$$
V=Re_1\oplus\cdots \oplus Re_{2n}\oplus  R\pi e_1\oplus\cdots \oplus R\pi e_{2n}.
$$
Suppose $V$ were $R$-free. Since $P=R\pi e_1\oplus\cdots \oplus R\pi e_{2n}$ is a direct summand of~$V$, then $P$ is a projective $R$-module. Since $R$ is local, it follows that $P$ is free. But $p^{\ell-1}P=0$. When $\ell>1$ neither $P$ nor $p^{\ell-1}$ are 0, so freeness
is contradicted.

Given an ideal $\i$ of $R$ we set
$$
V(\i)=\{v\in V\,|\, f(v,V)\subseteq \i\}.
$$
It is clear that $V(\i)$ is an $\Sp$-invariant submodule of $V$.

\begin{lemma}\label{orth3} We have
$$
\r^{2i-1}V=V(\m^i),\quad i\geq 1.
$$
In particular, $\r^j V$ is an $\Sp$-invariant $R$-submodule of $V$ for every $j\geq 0$.
\end{lemma}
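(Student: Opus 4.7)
The plan is to prove both inclusions of $\r^{2i-1}V = V(\m^i)$ directly and then deduce the $\Sp$-invariance of $\r^j V$ for all $j \geq 0$ as a short consequence. Three preliminary observations guide the argument: since $\pi^2 = p$ and $A = R \oplus R\pi$, the ideal $\r^{2i-1} = A\pi^{2i-1}$ equals $Rp^i + R\pi p^{i-1}$; the alternating form $f$ is the $R$-component of $h$, namely $f = \tfrac{1}{2}(h + h^*)$; and $*$-linearity gives $h(aw, u) = a^* h(w, u)$. For the forward inclusion $\r^{2i-1}V \subseteq V(\m^i)$, I would take a typical generator $aw$ with $a = \alpha p^{i-1}\pi + \beta p^i$ ($\alpha, \beta \in R$), and for any $u \in V$ write $h(w, u) = r + s\pi$ with $r \in R$; expanding $a^* h(w, u)$ using $\pi^* = -\pi$ and $\pi^2 = p$, its $R$-component simplifies to $(\beta r - \alpha s)p^i \in \m^i$, which is exactly $f(aw, u)$.

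For the reverse inclusion $V(\m^i) \subseteq \r^{2i-1}V$, I would fix a symplectic $A$-basis $u_1, \ldots, u_n, v_1, \ldots, v_n$ as in (\ref{symbas}) and expand $v \in V(\m^i)$ as $v = \sum_k (c_k + d_k\pi) u_k + \sum_k (e_k + g_k\pi) v_k$ with $c_k, d_k, e_k, g_k \in R$. A direct computation with (\ref{symbas}) and $*$-linearity yields $f(v, v_k) = c_k$, $f(v, u_k) = -e_k$, $f(v, \pi v_k) = -d_k p$, and $f(v, \pi u_k) = g_k p$. The first two conditions immediately give $c_k, e_k \in \m^i$, while the latter two give $d_k p, g_k p \in \m^i$. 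Since the annihilator of $p$ in $R$ is $Rp^{\ell-1}$, which is contained in $Rp^{i-1}$ for $i \leq \ell$, we deduce $d_k, g_k \in \m^{i-1}$. Hence every $A$-coefficient lies in $Rp^i + R\pi p^{i-1} = \r^{2i-1}$, so $v \in \r^{2i-1}V$. The boundary range $i \geq \ell$ is vacuous: $\r^{2\ell-1} = 0$ and $V(0) = 0$ by nondegeneracy of $f$, so both sides vanish.

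The $\Sp$-invariance of $\r^j V$ is then immediate: for odd $j = 2i-1$ it follows from the equation just proved, because $\Sp$ preserves $f$ by definition; for even $j = 2i$ one has $\r^{2i}V = Ap^iV = p^iV$, which is preserved by every element of $\Sp$ because the group acts $R$-linearly. The main technical hurdle will be the non-uniqueness of the $\pi$-coefficient of $h$, or equivalently the nontrivial annihilator $Rp^{\ell-1}$ of $\pi$ in $R$; this is handled cleanly by working only with the $R$-component (which defines $f$) and by exploiting the containment $Rp^{\ell-1} \subseteq Rp^{i-1}$ in the essential range $i \leq \ell$.
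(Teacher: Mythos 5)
Your proof is correct and follows essentially the same strategy as the paper's: the forward inclusion is a direct computation of the $R$-component of $h(\pi^{2i-1}w,u)$, and the reverse inclusion expands a vector in the symplectic $A$-basis and reads off the coefficients from the values of $f$ against $u_k$, $v_k$, $\pi u_k$, $\pi v_k$. The only cosmetic difference is that you split each $A$-coefficient into its $R$- and $R\pi$-components from the outset, whereas the paper keeps $b_k\in A$ and extracts $(b_k+b_k^*)/2$ and $\pi(b_k-b_k^*)/2$; these are the same decomposition in different notation, and your explicit treatment of the annihilator of $p$ and the range $i\le\ell$ makes the divisibility step the paper leaves implicit fully rigorous.
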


\begin{proof} Let $v,w\in V$. Then $h(v,w)=r+s\pi$, where $r,s\in R$, so
$$h(v,\pi^{2i-1} w)=\pi^{2i-1}(r+s\pi)=sp^{i}+r\pi^{2i-1},$$
whence $f(v,\pi^{2i-1} w)=s p^{i}\in \m^{i}$. This proves that $\r^{2i-1}V \subseteq V(\m^{i})$.
Suppose conversely that $u\in V(\m^{i})$. By
\cite[Proposition 2.12]{CS}, the module $V$ has a basis $u_1,\dots,u_n,v_1,\dots,v_n$ satisfying (\ref{symbas}).
We have $$u=a_1u_1+\cdots+a_n u_n+b_1v_1+\cdots+b_n v_n, \quad a_k,b_k\in A.$$ Thus
$$
(b_k+b_k^*)/2=f(u_k,u)\in R p^{i},\quad 1\leq k\leq n.
$$
Moreover, since
$$
\pi (b_k-b_k^*)/2=f(-\pi u_k,u)\in R p^{i}=R\pi\pi^{2i-1},\quad 1\leq k\leq n,
$$
we see that
$$
(b_i-b_i^*)/2\in R\pi^{2i-1},\quad 1\leq k\leq n.
$$
It follows that
$$
b_k\in R p^{i}+R\pi^{2i-1}=\r^{2i-1},\quad 1\leq k\leq n.
$$
A similar argument using $v_k$ instead of $u_k$, shows that all $a_k\in \r^{2i-1}$, so that $u\in\r^{2i-1}V$.

The above shows all $\r^{2i-1} V$ are $\Sp$-invariant $R$-submodules of $V$. Given that $\r^{2i}V=\m^i V$
is also an $\Sp$-invariant $R$-submodule of $V$, the proof is complete.
\end{proof}

Given an $R$-submodule $U$ of $V$ we will write
$$
U^\perp=\{v\in V\,|\, f(v,U)=0\}.
$$
As shown in \cite[Lemma 2.1]{CMS2}, we have
\begin{equation}\label{size}
|V|=|U||U^\perp|.
\end{equation}
It is clear that $U^\perp$ is an $\Sp$-invariant submodule of $V$ provided $U$ is.
This is the case, for instance, when $U=I V$ and $I$ is an ideal of $R$. If $J$ denotes the
annihilator of $I$ in $R$, then \cite[Lemma 5.4]{CMS2} gives
\begin{equation}\label{orth}
(J V)^\perp =V(I)\text{ and }V(J)^\perp=I V.
\end{equation}

\begin{lemma}\label{orth4} Let $\i$ be an ideal of $A$ with annihilator $\j$. Then
$$
(\i V)^\perp=\j V.
$$
\end{lemma}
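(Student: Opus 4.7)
The plan is to exploit the local ring structure of $A$ to reduce the statement to only finitely many instances. Since $A$ is a local PID with maximal ideal $\r = A\pi$ of nilpotency degree $2\ell-1$, every ideal of $A$ has the form $\r^k$ for some $0 \leq k \leq 2\ell-1$. A short calculation using $\pi^2=p$ and $\pi^{2\ell-1}=0$ (splitting into the cases $k=2j$ and $k=2j+1$, writing a general element as $r+s\pi$ with $r,s\in R$) identifies the annihilator in $A$ of $\r^k$ as $\r^{2\ell-1-k}$. So it suffices to prove
$$ (\r^k V)^\perp = \r^{2\ell-1-k}V \quad \text{for each } 0\leq k\leq 2\ell-1. $$

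Next, I would rewrite $\r^k V$ in terms of the modules already handled by the previous results. For even $k=2j$, one has $\r^{2j}V = Ap^jV = \m^jV$, while Lemma~\ref{orth3} identifies $\r^{2j-1}V=V(\m^j)$ for odd $k=2j-1$. Thus the two cases reduce to computing orthogonal complements of $\m^jV$ and of $V(\m^j)$ inside $V$.

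Both of these are delivered by the identities (\ref{orth}). For the even case $k=2j$, taking $I=\m^{\ell-j}$ with annihilator $J=\m^j$ in $R$, the identity $(JV)^\perp=V(I)$ gives $(\m^jV)^\perp = V(\m^{\ell-j}) = \r^{2(\ell-j)-1}V = \r^{2\ell-1-k}V$, as desired. For the odd case $k=2j-1$, the same identity with $I=\m^j$ and $J=\m^{\ell-j}$ yields $(\m^{\ell-j}V)^\perp=V(\m^j)$; since $\perp$ is an involution on $R$-submodules of $V$ (which follows from (\ref{size}) together with the trivial containment $U\subseteq (U^\perp)^\perp$), we deduce $V(\m^j)^\perp = \m^{\ell-j}V = \r^{2\ell-2j}V=\r^{2\ell-1-k}V$.

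The main obstacle is really only bookkeeping: one has to keep straight the distinction between ideals of $A$ and ideals of $R$ (annihilators in the two rings differ), and match up the parity of $k$ with the correct form of $\r^k V$ and of the corresponding annihilator $\r^{2\ell-1-k}V$. Once the parity cases are lined up correctly, the result drops out of Lemma~\ref{orth3}, (\ref{orth}), and the involutivity of $\perp$.
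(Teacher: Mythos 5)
Your proof is correct and follows essentially the same route as the paper: reduce to the two parity classes $\r^{2j}V=\m^jV$ and $\r^{2j-1}V=V(\m^j)$ via Lemma~\ref{orth3}, then apply (\ref{orth}). The only cosmetic difference is in the odd case, where you derive $V(\m^j)^\perp=\m^{\ell-j}V$ from the identity $(JV)^\perp=V(I)$ together with involutivity of $\perp$ (via (\ref{size})), whereas the paper simply quotes the second half of (\ref{orth}), namely $V(J)^\perp=IV$, directly.
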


\begin{proof} Suppose first $\i=\r^{2i-1}$, where $1\leq i\leq\ell$. Then by Lemma \ref{orth3} and (\ref{orth}), we have
$$
(\r^{2i-1}V)^\perp=V(\m^i)^\perp=\m^{\ell-i}V=\r^{2(\ell-i)}V,
$$
where $\j=\r^{2(\ell-i)}$ is the annihilator of $\i$. Suppose next $\i=\r^{2i}$, where $0\leq i<\ell$. Then by Lemma \ref{orth3} and (\ref{orth}), we have
$$
(\r^{2i}V)^\perp=(\m^i V)^\perp=V(\m^{\ell-i})=\r^{2(\ell-i)-1}V,
$$
where $\j=\r^{2(\ell-i)-1}$ is the annihilator of $\i$.
\end{proof}

\section{Schr$\ddot{\rm o}$dinger and Weil representation}\label{hes}

A linear character $R^+\to\C^\times$ is said to be primitive if its kernel does not contain any ideal of $R$ but $(0)$.
Note that $R^+$ has $q^\ell-q^{\ell-1}>0$ primitive linear characters. We fix one of them, say $\lambda:R^+\to\C^\times$.

Let $H=R\times V$ be the Heisenberg group, with multiplication
$$
(r,u)(s,v)=(r+s+f(u,v),u+v).
$$
We identify the center $Z(H)=(R,0)$ of $H$ with $R^+$. Note that $\Sp$ acts on $H$ by means of automorphisms via
$$
{}^g (r,u)=(r,gu).
$$
Given an $R$-submodule $U$ of $V$, we consider the normal subgroup $H(U)=(R,U)$ of $H$.

\begin{prop}\label{schro} The Heisenberg group $H$ has a
unique irreducible module, up to isomorphism such that $Z(H)$ acts on it via $\lambda$.
Its dimension is equal to $\sqrt{|V|}$.
\end{prop}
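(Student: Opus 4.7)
The plan is to prove this via the standard Stone--von Neumann construction: realize the desired irreducible as an induced representation from a Lagrangian, check irreducibility via Mackey's criterion, and settle uniqueness by a central-character count in the group algebra. The first step is to exhibit an $R$-submodule $W$ of $V$ with $W = W^\perp$. Taking the symplectic $A$-basis $u_1,\dots,u_n,v_1,\dots,v_n$ from (\ref{symbas}), I would propose $W = Au_1\oplus\cdots\oplus Au_n$. Isotropy of $W$ under $h$ (hence under $f$) is immediate from (\ref{symbas}). For the reverse containment, I expand $v = \sum a_iu_i + \sum b_iv_i \in W^\perp$. Pairing $v$ against $u_k$ via $f$ forces $b_k \in S = R\pi$; writing $b_k = c_k\pi$ with $c_k\in R$ and pairing against $\pi u_k$ forces $c_k p = 0$, so that $c_k$ lies in the annihilator of $p$ in $R$, namely $Rp^{\ell-1}$. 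Consequently $b_k \in Rp^{\ell-1}\pi = R\pi^{2\ell-1} = 0$, since $\pi^{2\ell-1} = 0$ in $A$. Thus $W = W^\perp$, and (\ref{size}) gives $|W| = \sqrt{|V|}$.

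Next I consider $H(W)$, which is abelian because $f$ vanishes on $W\times W$, and normal in $H$ because $H/Z(H)\cong V$ is abelian. Define $\tilde\lambda : H(W)\to\C^\times$ by $\tilde\lambda(r,w) = \lambda(r)$, and set $M = \mathrm{Ind}_{H(W)}^H\tilde\lambda$, so that $\dim M = [H:H(W)] = |V|/|W| = \sqrt{|V|}$. A direct computation with the Heisenberg law gives $(r,u)(s,v)(r,u)^{-1} = (s + 2f(u,v),\,v)$; therefore for $h = (r,u)$ with $u\notin W$ the conjugate character is $\tilde\lambda^h(s,v) = \lambda(s - 2f(u,v))$, and the equality $\tilde\lambda^h = \tilde\lambda$ would force $\lambda$ to vanish on the $R$-ideal $2f(u,W)$ of $R$. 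Since $2\in R^\times$ and $\lambda$ is primitive, this yields $f(u,W) = 0$, i.e., $u\in W^\perp = W$, a contradiction. Mackey's irreducibility criterion applied to the normal abelian subgroup $H(W)$ now gives that $M$ is irreducible.

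For uniqueness, I use the standard central-character count: the idempotent $e_\lambda = |Z(H)|^{-1}\sum_{z\in Z(H)}\lambda(z^{-1})z$ cuts out a two-sided ideal of $\C[H]$ of dimension $|H|/|Z(H)| = |V|$, which decomposes as $\bigoplus_\rho\End(V_\rho)$ summed over the irreducibles $\rho$ of $H$ with central character $\lambda$. Our $M$ alone contributes $(\dim M)^2 = |V|$, saturating this total, so $M$ is the only such irreducible up to isomorphism. The one delicate point is the verification $W = W^\perp$: because $V$ fails to be a free $R$-module when $\ell > 1$, one has to carefully track the interplay of the $R$- and $A$-structures via the relation $\pi^{2\ell-1} = 0$ in $A$; once this is in place, the rest is standard finite-group representation theory.
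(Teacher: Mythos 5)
Your proof is correct and follows the same standard Stone--von Neumann route that the paper uses via its citation to \cite[Proposition 2.1]{CMS1}: induce from a character on $H(W)$ for a Lagrangian $W$, use Mackey's criterion for irreducibility, and count via the central idempotent $e_\lambda$ for uniqueness; indeed, the paper's own proof of Theorem~\ref{nonzero} later invokes exactly this construction with $U = M = Au_1 \oplus \cdots \oplus Au_n$, which is your $W$. One small economy: having shown $W$ is $f$-isotropic and $|W| = |A|^n = \sqrt{|V|}$, equation~(\ref{size}) forces $|W^\perp| = |W|$ and hence $W = W^\perp$ immediately, so your explicit coordinate computation of $W^\perp$ is a useful sanity check (correctly tracking $\pi^{2\ell-1}=0$) but not strictly necessary.
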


\begin{proof} This can be found in \cite[Proposition 2.1]{CMS1}.
\end{proof}

We fix a Schr$\ddot{\rm o}$dinger representation $S:H\to\GL(X)$ of type $\lambda$, that is,
a representation satisfying the conditions stated in Proposition \ref{schro}. For $g\in\Sp$, the conjugate representation $S^g:H\to\GL(X)$, given by
$$
S^g(r,u)=S(r,gu),
$$
is also irreducible and lies over $\lambda$. By Proposition \ref{schro}, $S$ and $S^g$ are equivalent. By \cite[Theorem 3.1]{CMS1}
there is a representation $W:\Sp\to\GL(X)$ such that
\begin{equation}
\label{defew}
W(g)S(k)W(g)^{-1}=S({}^g k),\quad g\in\Sp, k\in H.
\end{equation}
We may thus view $X$ as a module for the semidirect product $H\rtimes \Sp$.

\begin{definition}\label{defw} Let $G$ be a subgroup of $\Sp$. By a Weil representation of $G$ of type~$\lambda$ we
understand any representation $W':G\to\GL(X)$ satisfying (\ref{defew}) for all $g\in G$ and $k\in H$. Since $S$ is irreducible,
Schur's Lemma ensures that the Weil representations of $G$ of type $\lambda$
are of the form
$$
g\mapsto \tau(g)W(g),\quad g\in G,
$$
where $\tau$ is a linear character of $G$.
\end{definition}

Let $U$ be a totally isotropic $R$-submodule $U$ of $V$ relative to $f$. Then $(0,U)$ is a subgroup of $H$ and
we define $X(U)$ to be the fixed points of $(0,U)$ in $X$, that is,
$$
X(U)=\{x\in X\,|\, S(0,u)x=x\text{ for all } u\in U\}.
$$
Note that if the submodule $U$ is $\Sp$-invariant, then the subgroup $(0,U)$ is normalized by $\Sp$, whence
$X(U)$ is a $\C \Sp$-submodule of $X$.

\begin{prop}\label{clifford} Let $U$ be a totally isotropic $R$-submodule of $V$ relative to $f$. Then $X(U)$
is an irreducible $H(U^\perp)$-submodule of $X$ of dimension $\sqrt{|U^\perp/U|}$.
\end{prop}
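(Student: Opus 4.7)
The plan is to combine a Clifford-theoretic analysis under the central subgroup $(0,U)$ with the Stone--von Neumann uniqueness built into Proposition~\ref{schro}.

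First I would verify that $(0,U)$ is in fact central in $H(U^\perp)$: for $u\in U$ and $v\in U^\perp$ one has $f(v,u)=0=f(u,v)$, so $(r,v)(0,u)=(r,v+u)=(0,u)(r,v)$ in $H$. Consequently $(0,U)$ acts semisimply on $X$, the fixed space $X(U)$ is the isotypic component for the trivial character of $(0,U)$, and it is stable under all of $H(U^\perp)$.

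Next I would determine $\dim X(U)$ through the standard character-vanishing argument for Heisenberg groups. A direct calculation gives $(r_1,v_1)(0,u)(r_1,v_1)^{-1}=(2f(v_1,u),u)$ for every $(r_1,v_1)\in H$, so the class-function property of $\chi_X$ together with the fact that $Z(H)$ acts through $\lambda$ yields
$$\chi_X(0,u)=\lambda(2f(v_1,u))\,\chi_X(0,u),\qquad v_1\in V.$$
For $u\neq 0$ the set $f(V,u)$ is a nonzero ideal of $R$ by nondegeneracy of $f$; since $2\in R^\times$ and $\lambda$ is primitive, some $v_1$ produces $\lambda(2f(v_1,u))\neq 1$, forcing $\chi_X(0,u)=0$. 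Averaging over $U$ then gives
$$\dim X(U)=\frac{1}{|U|}\sum_{u\in U}\chi_X(0,u)=\frac{\sqrt{|V|}}{|U|}=\sqrt{|U^\perp/U|},$$
the last equality using $|V|=|U||U^\perp|$ from~(\ref{size}).

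Finally I would identify $X(U)$ as an irreducible Heisenberg module of the quotient group. Because $(0,U)$ acts trivially on $X(U)$, the latter is a module for $H(U^\perp)/(0,U)$, which is itself a Heisenberg group on $(U^\perp/U,\bar f)$ with $\bar f$ the form induced by $f$. Its nondegeneracy follows from $(U^\perp)^\perp=U$: applying~(\ref{size}) twice gives $|(U^\perp)^\perp|=|V|/|U^\perp|=|U|$, and the inclusion $U\subseteq(U^\perp)^\perp$ is automatic. By Proposition~\ref{schro} applied to this smaller Heisenberg group, its unique irreducible module with central character $\lambda$ has dimension $\sqrt{|U^\perp/U|}$; since $Z(H)$ still acts on the nonzero $X(U)$ by $\lambda$ and the dimensions match, $X(U)$ must be that irreducible module. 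The main obstacle is the character-vanishing step, where one has to leverage nondegeneracy of $f$ on the non-free $R$-module $V$ together with primitivity of $\lambda$; once this is in hand, Proposition~\ref{schro} delivers irreducibility immediately.
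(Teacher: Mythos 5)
Your argument is correct, and you should be aware that the paper itself does not supply a proof here: it simply cites \cite[Proposition 4.1 and Lemma 4.2]{CMS1}. Your self-contained treatment reproduces what those two results must say, and the route you take is the standard one. A few small checks worth noting. First, centrality of $(0,U)$ in $H(U^\perp)$ requires $U\subseteq U^\perp$, which holds because $U$ is totally isotropic; you implicitly use this when treating $(0,U)$ as a subgroup of $H(U^\perp)$. Second, in the character-vanishing step, after observing that $f(V,u)$ is a nonzero ideal of $R$, the relevant point is that $2\in R^\times$ makes $2f(V,u)=f(V,u)$, so primitivity of $\lambda$ (kernel containing no nonzero ideal) immediately produces a $v_1$ with $\lambda(2f(v_1,u))\neq 1$; you say exactly this, and it is what makes the argument work even though $V$ need not be a free $R$-module. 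Third, the reduction to the Heisenberg group on $U^\perp/U$ requires both that $\bar f$ is well defined and that it is nondegenerate; you correctly derive the latter from $(U^\perp)^\perp=U$, which in turn follows from the automatic inclusion plus the order count given by~(\ref{size}). With these pieces in place, Proposition~\ref{schro} applied to the quotient Heisenberg group pins down the dimension of any irreducible constituent of $X(U)$ lying over $\lambda$, and the equality of dimensions forces irreducibility. Nothing is missing.
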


\begin{proof} This is shown in \cite[Proposition 4.1 and Lemma 4.2]{CMS1}.
\end{proof}

Given a finite group $G$, two $G$-modules $P$ and $Q$ and a finite $G$-set~$Y$, we set
$$
[P,Q]_G=\dim_\C\Hom_{\C G}(P,Q),
$$
$$
O_G(Y)=\text{number of }G\text{-orbits of }Y.
$$
By \cite[Theorem 4.5]{CMS1} for any subgroup $G$ of $\Sp(V)$, we have
\begin{equation}\label{number}
[X,X]_G=O_G(V).
\end{equation}

\section{Orbits}\label{ors}

\begin{lemma}\label{val} Recalling that $S=\{a\in A\,|\, a^*=-a\}$, we have
$$
\{h(u,u)\,|\, u\in V\setminus \r V\}=S.
$$
\end{lemma}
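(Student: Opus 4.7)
The plan is to handle the two inclusions separately. The inclusion $\{h(u,u) \mid u \in V \setminus \r V\} \subseteq S$ is immediate from the skew hermitian identity $h(v,u) = -h(u,v)^*$ specialized at $u=v$, which forces $h(u,u)^* = -h(u,u)$ for every $u \in V$, with no restriction on $u$ at all.

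For the reverse inclusion, I would invoke the symplectic basis $u_1,\dots,u_n,v_1,\dots,v_n$ provided by (\ref{symbas}) and probe with vectors of the form $w = u_1 + b v_1$, where $b \in A$ is to be chosen. Expanding $h(w,w)$ using the skew hermitian properties together with $h(u_1,u_1) = h(v_1,v_1) = 0$ and $h(u_1,v_1)=1$, the diagonal contributions vanish while the cross terms collapse to $h(u_1,bv_1) = b$ and $h(bv_1,u_1) = b^* h(v_1,u_1) = -b^*$, so that $h(w,w) = b - b^*$.

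Writing $b = x + y\pi$ with $x,y \in R$, one finds $b - b^* = 2y\pi$, and since the residue characteristic is odd, $2$ is a unit in $R$; hence as $y$ ranges over $R$, the expression $b - b^*$ sweeps out all of $R\pi = S$. Thus every $a \in S$ is realized as $h(w,w)$ for an appropriate choice $b \in R\pi \subseteq \r$. To finish, I would check that $w \notin \r V$: since $b \in \r$ we have $bv_1 \in \r V$, so $w \equiv u_1 \pmod{\r V}$, and $u_1$ is nonzero modulo $\r V$ because it is a member of an $A$-basis of $V$. There is no real obstacle here; the statement reduces, once the symplectic basis of \cite[Proposition 2.12]{CS} is available, to the elementary observation that the trace-zero part of $A$ is precisely $R\pi$ and is hit by the map $b \mapsto b - b^*$.
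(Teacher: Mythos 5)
Your proof is correct and follows essentially the same approach as the paper: one inclusion is immediate from the skew hermitian identity, and the other is obtained by probing with vectors $u_1 + r\pi v_1$ ($r\in R$), which is exactly the set $T$ the paper uses. The only difference is that you spell out the computation $h(w,w)=b-b^*=2y\pi$ and the use of $2\in R^\times$, which the paper leaves as a one-line assertion.
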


\begin{proof} Since $h$ is skew hermitian, $h(u,u)\in S$ for all $u\in V$.
By virtue of \cite[Proposition 2.12]{CS}, $V$ has an $A$-basis $u_1,\dots,u_n,v_1,\dots,v_n$
satisfying (\ref{symbas}). Then
$$
T=\{u_1+\pi r v_1\,|\, r\in R\}
$$
is a subset of $V\setminus \r V$ satisfying
$$
\{h(u,u)\,|\, u\in T\}=S.
$$
\end{proof}

\begin{lemma}\label{orbso} We have
$$
O_\Sp(V)=2\ell.
$$
\end{lemma}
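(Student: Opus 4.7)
My plan is to identify the $\Sp$-orbits on $V$ with the shells of a natural $\Sp$-invariant filtration. By Lemma~\ref{orth3} each $\r^j V$ is $\Sp$-invariant, and
$$
V=\r^0 V\supsetneq \r V\supsetneq \r^2 V\supsetneq \cdots\supsetneq \r^{2\ell-1}V=0
$$
is strictly decreasing (since $\r^{2\ell-1}=0$ in $A$ while $V/\r V$ is a nonzero $F_q$-space, with the other strict inclusions then following by multiplying by $\pi^j$). Consequently the $2\ell-1$ shells $\r^j V\setminus\r^{j+1}V$ for $0\le j\le 2\ell-2$, together with $\{0\}$, form $2\ell$ disjoint $\Sp$-invariant subsets of $V$, yielding the lower bound $O_\Sp(V)\ge 2\ell$.

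For the matching upper bound I would show $\Sp$ acts transitively on each shell, with orbit representative $\pi^j u_1$ for $\r^j V\setminus \r^{j+1}V$. Fix the $A$-symplectic basis $u_1,\dots,u_n,v_1,\dots,v_n$ of $V$ from \cite[Proposition~2.12]{CS}. Given $v=\pi^j w\in \r^j V\setminus\r^{j+1}V$ with $w\in V\setminus\r V$, a skew hermitian Witt extension of the type underlying \cite[Proposition~2.12]{CS} produces $\phi\in\U$ with $\phi(w)=u_1+cv_1$ where $c=h(w,w)/2\in R\pi$; since $\phi$ is $A$-linear, $\phi(v)=\pi^j u_1+\pi^j c v_1$. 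To kill the extra term I define the $A$-linear map $g_c\colon V\to V$ by $g_c(u_1)=u_1-cv_1$, $g_c(v_1)=v_1$, and $g_c(u_i)=u_i$, $g_c(v_i)=v_i$ for $i>1$. A direct computation using $c^*=-c$ shows that $g_c$ preserves $f$ (though not $h$, since $h(g_c u_1,g_c u_1)=-2c\ne 0$ in general), so $g_c\in\Sp\setminus\U$, and $g_c(\pi^j u_1+\pi^j c v_1)=\pi^j u_1$ for every $j$. Hence $g_c\phi\in\Sp$ carries $v$ to $\pi^j u_1$, completing the upper bound.

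The main obstacle will be the construction and verification of this symplectic but non-unitary ``tilt'' $g_c$, since it must move the $\U$-invariant $h(v,v)$ (which ranges over all of $S$ by Lemma~\ref{val}) while still lying in $\Sp$; no element of $\U$ can do this. The key technical observation is that the same $A$-linear tilt works uniformly on every shell because $\phi$ and $g_c$ both commute with $\pi^j$. Checking $g_c\in\Sp$ reduces to the identity $\frac{1}{2}(h(g_c u_1,g_c u_1)+h(g_c u_1,g_c u_1)^*)=0$, which is automatic because $c\in S$ already satisfies $c^*=-c$; the non-freeness of $V$ as an $R$-module (for $\ell>1$) enters only when one unwinds this verification on an $R$-basis of $V$ respecting the annihilator $Rp^{\ell-1}$ of $\pi$.
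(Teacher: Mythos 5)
Your shell decomposition of $V$ and the lower bound $O_\Sp(V)\ge 2\ell$ are correct and coincide with the paper's. The gap is in the construction of $g_c$: the map you define is $A$-linear, and \emph{an $A$-linear endomorphism of $V$ that preserves $f$ necessarily preserves $h$}, i.e.\ $\Sp\cap\GL_A(V)=\U$. Indeed, writing $h(v,w)=f(v,w)+k(v,w)\pi$ with $f(v,w),k(v,w)\in R$, one finds $f(\pi v,w)=-k(v,w)p$; since the annihilator of $p$ in $R$ is $Rp^{\ell-1}$ and $Rp^{\ell-1}\pi=(0)$, the element $k(v,w)\pi$ is recovered from $f(\pi v,w)$, so $h$ is determined by the values of $f$ on the pairs $(v,w)$ and $(\pi v,w)$. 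Any $A$-linear $g\in\Sp$ therefore satisfies $f(\pi gv,gw)=f(g\pi v,gw)=f(\pi v,w)$ and hence $h(gv,gw)=h(v,w)$. Since you computed $h(g_cu_1,g_cu_1)=-2c\ne 0=h(u_1,u_1)$, the map $g_c$ cannot lie in $\Sp$; there is simply no $A$-linear element of $\Sp\setminus\U$.

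Concretely, the claim that checking $g_c\in\Sp$ ``reduces to the identity $\tfrac12\bigl(h(g_cu_1,g_cu_1)+h(g_cu_1,g_cu_1)^*\bigr)=0$'' is false: $f$ is only $R$-bilinear, so one must check preservation on an $R$-generating set such as $\{u_i,v_i,\pi u_i,\pi v_i\}$, not merely on the $A$-basis. The pair $(\pi u_1,u_1)$ already fails:
$$
f(g_c\pi u_1,\,g_cu_1)=f(\pi u_1-\pi c v_1,\,u_1-cv_1)=2\pi c\ne 0=f(\pi u_1,u_1)
$$
whenever $\pi c\ne 0$, i.e.\ for generic $c\in S$. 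This is exactly the point where the non-freeness of $V$ over $R$, and the fact that $\U$ and $\Sp$ have genuinely different behaviour, cannot be waved away. The paper gets around this by not attempting a single uniform $A$-linear ``tilt'': it establishes transitivity on $V\setminus\r V$ via \cite[Lemma 5.2]{CMS2}, on $\r V\setminus\r^2V$ via an Eichler transformation $\tau_{u,w}$ (which is $R$-linear but \emph{not} $A$-linear, precisely what is needed to leave $\U$ while staying in $\Sp$), and then transports these two orbits to the deeper shells by multiplying by $p^i\in R$, which does commute with $\Sp$. You would need to replace $g_c$ with some genuinely $R$-linear, non-$A$-linear construction of that kind.
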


\begin{proof} We claim that the $\Sp$-orbits of $V$ are
$$
V\setminus \r V,\; \r V\setminus \r^2 V,\;\r^2 V\setminus \r^3 V,\;\dots,\; \r^{2\ell-2}\setminus \{0\},\;\{0\}.
$$

First of all, $V\setminus \r V$ is an $\Sp$-orbit. Indeed, we know from \cite[Lemma 5.2]{CMS2} that $V\setminus V(\m)$ is
an $\Sp$-orbit while, on the other hand, Lemma \ref{orth3} ensures that $\r V=V(\m)$.

If $\ell=1$ we are done. Suppose henceforth that $\ell>1$. Care is required, as the actions of $\Sp$ and $A$ on $V$ do not commute.

We next verify that $\r V\setminus \r^2 V$ is an $\Sp$-orbit. Note that this set is $\Sp$-stable,
as $\r V\setminus \r^2 V=V(\m)\setminus \m V$. By \cite[Theorem 3.1]{CS}, if $u,v\in V\setminus \r V$,
then $u$ and $v$ are in the same $\U$-orbit if and only if $h(u,u)=h(v,v)$.
Let $u_1,\dots,u_n,v_1,\dots,v_n$ and $T$ have the same meaning as in the proof of the Lemma \ref{val}.
Since the actions of $\U$ and $A$ on $V$ commute and $\U$ is a subgroup of $\Sp$, it follows that every $w\in \r V\setminus \r^2 V$
is $\Sp$-conjugate to an element of $\pi T$. We wish to show that $\pi T$ is inside an $\Sp$-orbit.
To see this, let $\pi u_1+prv_1$ be an arbitrary element of $\pi T$. For $u,w\in V$ satisfying $f(u,w)=0$
the Eichler transformation $\tau_{u,w}$ of $V$, given~by
$$
v\mapsto v+f(v,u)w+f(v,w)u,\quad v\in V,
$$
is easily seen to belong to $\Sp$. Given $t\in R$, we set
$$
u=tv_1,\; w=\pi v_1,\; v=\pi u_1+prv_1.
$$
Note that
$$
f(v,u)=0,\; f(v,w)=-p, f(u,w)=0.
$$
Therefore $\tau_{u,w}\in\Sp$ and
$$
\tau_{u,w}(v)=\pi u_1+p rv_1-p t v_1=\pi u_1+p(r-t)v_1.
$$
By varying $t\in R$, we see that $\pi T$ is inside an $\Sp$-orbit. This proves that
$\r V\setminus \r^2 V$ is an $\Sp$-orbit.

Finally, since $$\r^2 V\setminus \r^3 V=p V\setminus p\r V=p(V\setminus \r V),$$
and $V\setminus \r V$ is an $\Sp$-orbit, it follows that $\r^2 V\setminus \r^3 V$ is an $\Sp$-orbit. Likewise,
since $$\r^3 V\setminus \r^4 V=p\r V\setminus p\r^2 V=p(\r V\setminus \r^2 V),$$
and $\r V\setminus \r^2 V$ is an $\Sp$-orbit, it is clear that $\r^3 V\setminus \r^4 V$ is an $\Sp$-orbit, and so on.
\end{proof}

\begin{lemma}\label{ols} We have
$$
O_\U(V\setminus \r V)=|S|.
$$
\end{lemma}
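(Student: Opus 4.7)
The plan is to combine two ingredients already at hand: the orbit classification from \cite[Theorem 3.1]{CS}, which was quoted in the proof of Lemma~\ref{orbso}, and the image computation of Lemma~\ref{val}. Together they reduce the orbit count to the size of $S$ in a single stroke.

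First, I would introduce the map
\[
\Phi: V\setminus \r V \to S, \qquad u \mapsto h(u,u).
\]
This is well defined because $h$ is skew hermitian, so $h(u,u)\in S$ for every $u\in V$. It is also $\U$-invariant, since by definition $h(gu,gu)=h(u,u)$ for every $g\in\U$. Moreover, $\U$ preserves the filtration $\r^i V$: as $\U\subseteq\GL_A(V)$, the action of $\U$ on $V$ commutes with multiplication by $\pi$, hence with multiplication by any element of $A$, so $\U$ stabilizes $\r V$ and therefore its complement $V\setminus\r V$. This shows that the fibers of $\Phi$ are unions of $\U$-orbits.

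Second, I would invoke \cite[Theorem 3.1]{CS}, which asserts precisely that two elements $u,v\in V\setminus\r V$ lie in the same $\U$-orbit if and only if $h(u,u)=h(v,v)$. Consequently the fibers of $\Phi$ are exactly the $\U$-orbits on $V\setminus\r V$, and
\[
O_\U(V\setminus\r V) = \#\Phi(V\setminus\r V).
\]
Finally, Lemma~\ref{val} identifies $\Phi(V\setminus\r V)$ with $S$, so $O_\U(V\setminus\r V)=|S|$, as claimed.

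Since both underlying results have already been established, no real obstacle remains; the only thing to be mildly careful about is verifying that $\U$ indeed stabilizes $V\setminus\r V$, which follows immediately from the $A$-linearity of the action, and checking that Lemma~\ref{val} gives equality rather than just containment, which it does.
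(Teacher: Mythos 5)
Your argument is essentially the paper's for $\ell>1$: the $\U$-orbit criterion of \cite[Theorem 3.1]{CS} together with Lemma~\ref{val} gives the count at once, exactly as in the paper. The one thing you have omitted is that the paper handles $\ell=1$ as a separate (trivial) case before invoking \cite[Theorem 3.1]{CS}: when $\ell=1$ one has $A=R=F_q$, $\pi=0$, hence $\r V=\{0\}$ and $S=\{0\}$, and $\U=\Sp$ acts transitively on $V\setminus\{0\}$ because every nonzero vector extends to a symplectic basis; so $O_\U(V\setminus\r V)=1=|S|$. The paper consistently cites \cite[Theorem 3.1]{CS} only under the hypothesis $\ell>1$ (both here and in the proof of Lemma~\ref{orbso}), which suggests the cited result is stated in a setting that excludes the field case. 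Unless you check that the reference literally covers $\ell=1$, you should add that one line to make the proof watertight for all $\ell\geq 1$.
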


\begin{proof} If $\ell=1$ then any nonzero vector of $V$ is part of a symplectic basis and $|S|=1$, so the result follows.
Suppose next $\ell>1$. By \cite[Theorem 3.1]{CS}, if $u,v\in V\setminus \r V$ then $u,v$ are in the same $\U$-orbit
if and only if $h(u,u)=h(v,v)$. Now apply Lemma \ref{val}.
\end{proof}

\begin{lemma}\label{igual} We have
$$
O_\U(V\setminus \r V)=O_\U(\r V\setminus \r^2 V).
$$
\end{lemma}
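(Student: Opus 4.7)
The plan is to construct a $\U$-equivariant surjection
$$
\phi: V \setminus \r V \longrightarrow \r V \setminus \r^2 V, \qquad v \mapsto \pi v,
$$
and to show that $\phi$ induces a bijection between the $\U$-orbits of its domain and codomain. Since the $\U$-action commutes with scalar multiplication by $A$, the equivariance is automatic.

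First I would verify that $\phi$ is well-defined and surjective. Every element of $\r V=\pi V$ has the form $\pi v$ for some $v\in V$, and if $v\in \r V$ then $\pi v\in\r^2 V$; conversely, if $\pi v\in\r^2 V=pV$, say $\pi v=pw=\pi^2 w$, then $v-\pi w$ is annihilated by $\pi$, hence lies in $\n V=\r^{2\ell-2}V\subseteq \r V$, forcing $v\in \r V$. So $\phi$ sends $V\setminus \r V$ onto $\r V\setminus\r^2 V$, and surjectivity of the induced map on orbit spaces follows.

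For injectivity, suppose $v,v'\in V\setminus \r V$ and $\phi(v),\phi(v')$ are $\U$-conjugate, say $g\phi(v)=\phi(v')$ for some $g\in\U$. Then $\pi(gv-v')=0$, so $gv-v'\in\n V$. Now invoking \cite[Theorem 3.1]{CS} exactly as in the proof of Lemma \ref{ols}, it suffices to show that $h(v,v)=h(v',v')$, i.e., that $h(gv,gv)=h(v',v')$. Expanding,
$$
h(gv,gv)-h(v',v')=h(gv-v',gv)+h(v',gv-v'),
$$
and since $gv-v'\in\n V$ while $\n=Rp^{\ell-1}$ is the minimal ideal of $A$ (satisfying $\n\cdot A\subseteq\n$), each term on the right lies in $\n$. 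On the other hand, both $h(gv,gv)$ and $h(v',v')$ lie in $S$ because $h$ is skew hermitian, so the difference lies in $S\cap\n$. From the decomposition $A=R\oplus R\pi$ together with $S=R\pi$ and $\n\subseteq R$, we get $S\cap\n=0$, whence $h(v,v)=h(v',v')$ and so $v$ and $v'$ are $\U$-conjugate by the cited theorem.

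The step I expect to be the crux is recognizing that the equality $S\cap\n=0$ upgrades the congruence modulo $\n$ to equality in $S$; everything else is a straightforward consequence of the $R$-module decomposition $A=R\oplus R\pi$ and the characterization of $\U$-orbits in $V\setminus\r V$ by the value of $h(-,-)$ on the diagonal. The case $\ell=1$ is degenerate (the set $\r V\setminus\r^2 V$ is empty), so the argument should be carried out for $\ell>1$.
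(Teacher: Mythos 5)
Your proposal is correct and follows essentially the same route as the paper: both reduce the orbit count to the invariant $h(v,v)\in S$ via \cite[Theorem 3.1]{CS}, and both hinge on the observation that a congruence modulo $\n$ between elements of $S$ is an equality because $S\cap\n=(0)$. The only cosmetic difference is that the paper packages the $\n$-reduction into an auxiliary $A/\n$-valued form $q$ on $\r V$, whereas you expand $h(gv,gv)-h(v',v')$ directly; the content is the same.
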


\begin{proof} Let $E$ be a set of representatives for the
$\U$-orbits of $V\setminus \r V$. It is clear that every vector in
$\r V\setminus \r^2 V$ is $\U$-conjugate to a vector in $\pi E$. Thus the
map $E\to \pi E$, given by $e\mapsto \pi e$, is surjective. We claim
that it is also injective and, in fact, that if $e_1,e_2\in E$ and
$\pi e_1,\pi e_2$ are $\U$-conjugate then $e_1=e_2$. For this purpose we
recall that $\n=\m^{\ell-1}$, we view $\r V$ as a module for $A/\n $ and consider the map
$q:\r V\times \r V\to A/\n$ given by
$$
q(\pi v,\pi w)=h(v,w)+\n,\quad v,w\in V.
$$
Given $g\in \U$, for all $v,w\in V$ we have
$$
q(g\pi v,g\pi w)=q(\pi gv,\pi gw)=h(gv,gw)+\n =h(v,w)+\n=q(\pi v,\pi w),
$$
so the restriction of $g$ to $\pi V$ preserves $q$. Suppose $g\in \U$
satisfies $g\pi e_1=ye_2$. By above,
$$q(\pi e_2,\pi e_2)=q(g\pi e_1,g\pi e_1)=q(\pi e_1,\pi e_1),$$
which means
$$
h(e_1,e_1)-h(e_2,e_2)\in \n\cap S=(0).
$$
Thus $e_1,e_2$ are $\U$-conjugate by \cite[Theorem 3.1]{CS}. Since
$e_1,e_2\in E$, we infer $e_1=e_2$.
\end{proof}

We introduce new notation, provided $\ell>1$. Let $\overline{V}=V/\r^{2\ell-3}V$, which is a free module over $\overline{A}=A/\r^{2\ell-3}$ of rank $2n$.
For $a\in A$ and $v\in V$ set
$$
\overline{a}=a+\r^{2\ell-3}\in\overline{A}\text{ and }\overline{v}=v+\r^{2\ell-3}V\in\overline{V}.
$$
Note that $\overline{A}$ inherits an involution $\overline{*}$ from $A$, given by
$$
\overline{a}^{\overline{*}}=\overline{a^*},\quad a\in A.
$$
We further let
$\overline{R}=R/\m^{\ell-1}$, and for $r\in R$ we set
$$
\overline{r}=r+\m^{\ell-1}\in\overline{R}.
$$
We see that $\overline{R}$ imbeds into $\overline{A}$, that $\overline{R}$ is the fixed ring of $\overline{*}$ in $\overline{A}$,
and that $\overline{A}=\overline{R}\oplus \overline{R}\overline{\pi}$, where $\overline{\pi}^2=\overline{p}$ and the annihilator
of $\overline{\pi}$ in $\overline{R}$ is $\overline{R}\overline{p}^{\ell-2}$.

Observe that $h$ gives rise to a
nondegenerate skew hermitian form $\overline{h}:\overline{V}\times
\overline{V}\to \overline{A}$, given by
$$
\overline{h}(\overline{v},\overline{w})=\overline{h(v,w)}, \quad v,w\in V.
$$
Let $\overline{\U}$ stand for the associated unitary group.

\begin{lemma}\label{orbu} We have
$$
O_\U(V)=2(q^{\ell-1}+q^{\ell-2}+\cdots+1).
$$
\end{lemma}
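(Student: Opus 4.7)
The plan is to prove the formula by induction on $\ell$, exploiting the $\Sp$-orbit decomposition from Lemma \ref{orbso}. The base case $\ell=1$ is immediate: $A=R=F_q$ and $S=\{0\}$, so Lemma \ref{ols} says $V\setminus\{0\}$ is a single $\U$-orbit, giving $O_\U(V)=2=2q^{0}$.

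For the inductive step, assume $\ell>1$. I partition
$$
V=(V\setminus\r V)\sqcup(\r V\setminus\r^2 V)\sqcup\r^2 V.
$$
The first two pieces are handled by the lemmas already in hand. Since $S=R\pi\cong R/\m^{\ell-1}$, one has $|S|=q^{\ell-1}$, so Lemma \ref{ols} gives $O_\U(V\setminus\r V)=q^{\ell-1}$, and Lemma \ref{igual} then gives $O_\U(\r V\setminus\r^2 V)=q^{\ell-1}$ as well.

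For the third piece I would use that multiplication by $p=\pi^2$ is an $A$-linear, $\U$-equivariant surjection $V\to\r^2V$ with kernel $\mathrm{ann}_V(p)=\r^{2\ell-3}V$; this reduces to the calculation $\mathrm{ann}_A(p)=\r^{2\ell-3}$, verified by writing $a=r+s\pi$ with $r,s\in R$ and solving $pa=0$. Hence there is an induced $A$-linear isomorphism $\overline{V}\xrightarrow{\sim}\r^2 V$, $\bar v\mapsto pv$, intertwining the $\U$-action on $\r^2V$ with the $\U$-action on $\overline{V}$ obtained by factoring through the natural map $\U\to\overline{\U}$. Granting that $\U\to\overline{\U}$ is surjective, $\U$-orbits on $\r^2V$ correspond bijectively to $\overline{\U}$-orbits on $\overline{V}$. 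The inductive hypothesis applied to $\overline{V}$ (whose nilpotency parameter is $\ell-1$, since $\overline{A}$ has nilpotency $2(\ell-1)-1$) then yields $O_\U(\r^2V)=O_{\overline{\U}}(\overline{V})=2(q^{\ell-2}+\cdots+1)$. Summing gives $2q^{\ell-1}+2(q^{\ell-2}+\cdots+1)=2(q^{\ell-1}+\cdots+1)$, as claimed.

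The main obstacle is the surjectivity of the reduction $\U\to\overline{\U}$. I expect this to be a standard consequence of the structure theory developed in \cite{CS}, provable for instance by lifting a generating set of $\overline{\U}$ (such as the Eichler transformations $\tau_{u,w}$ appearing in the proof of Lemma \ref{orbso}) term by term back to $\U$. Alternatively one can bypass $\overline{\U}$ entirely by extending the auxiliary form $q$ of Lemma \ref{igual} to deeper layers, using $h(u,u)$ modulo a widening ideal of $S$ as a complete $\U$-invariant on each set $\r^{2i}V\setminus\r^{2i+1}V$ and $\r^{2i+1}V\setminus\r^{2i+2}V$ to directly count $q^{\ell-1-i}$ orbits per layer.
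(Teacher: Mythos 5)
Your proposal is correct and follows essentially the same inductive scheme as the paper's proof: split off $V\setminus\r^2 V$ (counted via Lemmas \ref{ols} and \ref{igual}), identify $\r^2 V$ with $\overline{V}$ via the $\U$-equivariant isometry $\pi^2 v\mapsto\overline{v}$, invoke surjectivity of $\U\to\overline{\U}$ (which is \cite[Theorem 4.1]{CS}), and apply the inductive hypothesis to $\overline{\U}$ acting on $\overline{V}$. Your side computations ($|S|=q^{\ell-1}$, $\mathrm{ann}_A(p)=\r^{2\ell-3}$) are accurate, and the obstacle you flag is exactly the one the paper resolves by citation.
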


\begin{proof} By induction on $\ell$. If $\ell=1$ then, by Lemma \ref{orbso}, $\U=\Sp$ has only two orbits on $V$, namely $V\setminus\{0\}$ and $\{0\}$. Suppose $\ell>1$ and the result is true for $1\leq \ell'<\ell$. By Lemmas \ref{ols} and \ref{igual},
we have
$$
O_\U(V\setminus \r^2 V)=2|S|=2q^{\ell-1}.
$$
By inductive hypothesis,
$$
O_{\overline\U}(\overline{V})=2(q^{\ell-2}+\cdots+q+1).
$$
On the other hand, $\r^2 V$ is also a free $\overline{A}$-module of rank $2n$ endowed with a nondegenerate skew hermitian form
$h':\r^2 V\times \r^2 V\to \overline{A}$, given by
$$
h'(\pi^2 v,\pi^2 v)=h(v,w)+\r^{2\ell-3},\quad v,w\in V.
$$
It is clear that the map $\Delta:\r^2 V\to \overline{V}$, given by
$$
\pi^2 v\mapsto \overline{v},\quad v\in V,
$$
is a well-defined isometry. Moreover, $\Delta$ commutes with the actions of $U$ on $\r^2 V$ (by restriction)
and $\overline{V}$ (inherited from the action of $\U$ on $V$). Furthermore, by \cite[Theorem 4.1]{CS}, the canonical map $\U\to\overline{\U}$ is an epimorphism. It follows that
$$
O_\U(\r^2 V)=O_\U(\overline{V})=O_{\overline\U}(\overline{V})=2(q^{\ell-2}+\cdots+q+1),
$$
so
$$
O_\U(V)=O_\U(V\setminus\r^2 V)+O_\U(\r^2 V)=2(q^{\ell-1}+q^{\ell-2}+\cdots+1).
$$
\end{proof}

\section{Decomposition of $X$ into irreducible $\U$-modules}\label{dec}

Suppose $\ell>1$ and keep the notation introduced prior to Lemma~\ref{orbu}. By Lemma \ref{orth3}, $\r^{2\ell-3}V=V(\m^{\ell-1})$, so that $\r^{2\ell-3}V$ is an $\Sp$-invariant $R$-submodule of $V$.
Note that $f$ gives rise to a
nondegenerate alternating form $\overline{f}:\overline{V}\times
\overline{V}\to \overline{R}$, given by
$$
\overline{f}(\overline{v},
\overline{w})=\overline{f(v,w)},\quad v,w\in V.
$$
Observe that $\overline{f}$ is the alternating form associated to $\overline{h}$. Let $\overline{\Sp}$ and $\overline{H}$  be the associated symplectic and Heisenberg groups.

We have natural group homomorphism $\U \to\overline{\U}$, say $g\mapsto \overline{g}$, shown to be surjective in \cite[Theorem 4.1]{CS},
and having kernel
$$
\Omega=\{g\in\U\,|\, gv\equiv v\mod \r^{2\ell-3}V\}.
$$

Recalling that $\n=\m^{\ell-1}=\r^{2\ell-2}$, we see that $\n^2=(0)$, so $\n V$ is a totally isotropic $R$-submodule of $V$ relative to $f$. Setting
$$
\Bot=X(\n V),
$$
we see that $\Bot$ is an $\Sp$-submodule of $X$. 

It is easy to see that $W(g)|_{Bot}$ must be a scalar operator for any $g\in\Omega$. Indeed, by Lemma \ref{orth4}, we have $(\n V)^\perp=\r V$.
Thus, by Proposition \ref{clifford}, $Bot=X(\n V)$ is an irreducible $H(\r V)$-module. But $R$ acts on $X$ by scaling through $\lambda$, so
$Bot$ is an irreducible $(\m,\r V)$-module. On the other hand, since the actions
of $\U$ and $A$ on $V$ commute, we readily verify that $\Omega$ acts trivially on every element of $(\m,\r V)/(0,\n V)$. We infer that
$$
W(g)|_{Bot}S(k)|_{\Bot} W(g)|_{Bot}^{-1}=S({}^g k)|_{\Bot}=S(k)|_{Bot}, \; g\in\Omega, k\in (\m,\r V).
$$
It follows from Schur's Lemma that $W(g)|_{Bot}$ is a scalar operator for every $g\in\Omega$, as claimed. For our purposes, we
need the stronger result that, for a suitable choice of $W$, $W(g)|_{Bot}=1_\Bot$ for every $g\in\Omega$.

To prove this we consider the map $(\m, \r V)\to \overline{H}$ given by
\begin{equation}
\label{yus} (r p,\pi v)\mapsto (-\overline{r},\overline{v}),\quad r\in R,v\in V.
\end{equation}
It is a well-defined group epimorphism with kernel
$(0,\n V)$. Since the actions of $\U$ and $A$ on $V$ commute, we see that (\ref{yus}) is compatible
with the actions $\U$ on $H$ and $\overline{H}$, in the sense that if $g\in\U$ and $\overline{k}\in\overline{H}$ is the element corresponding to $k\in (\m,\r V)$ then
\begin{equation}
\label{yus2} \overline{{}^g k}=\overline{{}^g}\, \overline{k}.
\end{equation}

Let $\overline{\lambda}$ be the primitive linear character of $\overline{R}$ given by
$$\overline{r}\mapsto \lambda(pr),\quad r\in R.$$
Let $\overline{S}:\overline{H}\to\GL(\Bot)$ be the representation of $\overline{H}$ obtained via
(\ref{yus}), that is,
\begin{equation}
\label{yus3}
\overline{S}(\overline{k})=S(k)|_{\Bot},\quad k\in (\m,\r V).
\end{equation}
It follows from Proposition \ref{clifford} that $\overline{S}$ is irreducible and it is clear from (\ref{yus3}) that
the center of $\overline{H}$ acts on $\Bot$ via $\overline{\lambda}$. Thus $\overline{S}$ is Schr$\ddot{\rm o}$dinger of type $\overline{\lambda}$ by Proposition \ref{schro}.

Let $\overline{W}:\overline{\Sp}\to\GL(\Bot)$ be a Weil representation of $\overline{\Sp}$ corresponding to $\overline{S}$.
The compatibility condition (\ref{yus2}) then gives
\begin{equation}
\label{yus4}
\overline{W}(\overline{g})\overline{S}(\overline{k})\overline{W}(\overline{g})^{-1}=
\overline{S}(\overline{{}^g}\, \overline{k})=\overline{S}(\overline{{}^g k}),\quad g\in\U, k\in (\m,\r V).
\end{equation}
Let $W_0:\U\to\GL(\Bot)$ be the representation defined by
$$
W_0(g)=\overline{W}(\overline{g}),\quad g\in\U.
$$
Then (\ref{yus3}) and (\ref{yus4}) give
$$
W_0(g)S(k)|_{\Bot} W_0(g)^{-1}=S({}^g k)|_{\Bot}, \quad g\in\U, k\in (\m,\r V).
$$
Since $\Bot$ is an irreducible $(\m,\r V)$-module via $k\mapsto S(k)|_{\Bot}$ as well as a $\U$-invariant submodule of $X$, it follows from Schur's Lemma that there is a linear character $\tau:\U\to\C$ such that
$$
\tau(g)W(g)|_{\Bot}=W_0(g),\quad g\in \U.
$$
According to Definition \ref{defw}, $g\mapsto \tau(g)W(g)$, $g\in\U$, is a Weil representation of type~$\lambda$. We have shown:

\begin{theorem}\label{botw} Suppose that $\ell>1$. Then there is a Weil representation of $\U$ of type $\lambda$ through which the
congruence subgroup $\Omega$ acts trivially on $\Bot$ and the corresponding representation of $\overline{\U}$ afforded by $\Bot$
is a Weil representation of type~$\overline{\lambda}$.
\end{theorem}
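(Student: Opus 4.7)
The plan is to finish assembling the construction laid out in the paragraphs immediately preceding the theorem, producing a single scalar twist of the chosen Weil representation $W$ that delivers both conclusions at once. More precisely, I intend to exhibit a linear character $\tau:\U\to\C^\times$ so that, replacing $W$ by $g\mapsto\tau(g)W(g)$, the restriction of the new representation to $\Bot$ factors through $\U\to\overline{\U}$ and realises the pullback of a chosen Weil representation of $\overline{\Sp}$ of type $\overline{\lambda}$.

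The first step is to promote (\ref{yus}) to a bona fide $\U$-equivariant group epimorphism $(\m,\r V)\to\overline{H}$ with kernel $(0,\n V)$; well-definedness uses $\pi^2=p$ together with the identification of $\n V$ in the relevant cosets, the homomorphism condition reduces to a short calculation, and the equivariance statement (\ref{yus2}) is immediate because the actions of $\U$ and $A$ on $V$ commute. Next, I would combine Proposition \ref{clifford} (invoking Lemma \ref{orth4} to identify $(\n V)^\perp=\r V$) with the fact that $R$ already acts on $X$ via $\lambda$ to conclude that $\Bot$ is irreducible as a module for the smaller group $(\m,\r V)$. Passing through (\ref{yus}) realises $\Bot$ as an irreducible module for $\overline{H}$ whose centre acts by $\overline{\lambda}$, and Proposition \ref{schro} then identifies this as a Schrödinger representation $\overline{S}$ of type $\overline{\lambda}$.

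Having fixed a Weil representation $\overline{W}$ of $\overline{\Sp}$ corresponding to $\overline{S}$, I would set $W_0(g)=\overline{W}(\overline{g})$ for $g\in\U$. The equivariance (\ref{yus2}) upgrades the defining Weil relation for $\overline{W}$ into (\ref{yus4}), which means that both $W_0$ and $W|_{\Bot}$ intertwine $S(k)|_{\Bot}$ with $S({}^g k)|_{\Bot}$ for all $k\in(\m,\r V)$. Since $\Bot$ is irreducible as an $(\m,\r V)$-module, Schur's lemma supplies a function $\tau:\U\to\C^\times$ with $\tau(g)W(g)|_{\Bot}=W_0(g)$, and multiplicativity of the honest representations $W_0$ and $W$ forces $\tau$ to be a linear character.

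Finally, by Definition \ref{defw}, $W':=\tau W$ is a Weil representation of $\U$ of type $\lambda$. For $g\in\Omega$ the image $\overline{g}$ is trivial, hence $W_0(g)=1_{\Bot}$ and therefore $W'(g)|_{\Bot}=1_{\Bot}$, giving the first assertion; by construction, the resulting representation of $\overline{\U}$ on $\Bot$ is the restriction of $\overline{W}$, which is Weil of type $\overline{\lambda}$, giving the second. The one step that is not formal bookkeeping is the application of Schur's lemma, which depends on the $(\m,\r V)$-irreducibility of $\Bot$; this is where the orthogonality identity $(\n V)^\perp=\r V$ from Lemma \ref{orth4} does the essential work, and I expect it to be the only genuinely delicate point in the argument.
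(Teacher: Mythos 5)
Your proposal is correct and essentially reproduces the paper's own argument: both pass to the epimorphism $(\m,\r V)\to\overline{H}$ with kernel $(0,\n V)$, use Lemma \ref{orth4} and Proposition \ref{clifford} to see $\Bot$ is irreducible over $(\m,\r V)$ (hence Schr\"odinger of type $\overline{\lambda}$ for $\overline{H}$), pull back a Weil representation $\overline{W}$ to define $W_0$, and apply Schur's lemma to produce the linear character $\tau$ yielding the desired twist $\tau W$. The only, harmless, cosmetic difference is that you spell out explicitly why $\tau$ is multiplicative; the paper leaves this to the reader.
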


Recall that $N=\{a\in A^\times \,|\, aa^*=1\}$. For $\phi\in\Irr(N)$, let
$$
\varepsilon_\phi=\frac{1}{|N|}\underset{a\in N}\sum \phi(a^{-1})a
$$
be the idempotent in $\C N$ associated to $\phi$. Given a $\C N$-module $Y$, we set
$$
Y(\phi):=\varepsilon_\phi Y=\{y\in Y\,|\, ay=\phi(a)y\text{ for all }a\in N\}.
$$
We refer to $Y(\phi)$ as the $N$-eigenspace of $Y$ associated to $\phi$. We know that $Y$ is the direct
sum of its $N$-eigenspaces, that is,
$$
Y=\underset{\phi\in \Irr(N)}\bigoplus Y(\phi).
$$
In general, one or more $Y(\phi)$ may be zero.

We apply this to the $\Sp$-module
$$
Top=X/\Bot.
$$
Since $N$ is a central subgroup of $\U$, each $\Top(\phi)$ is a $\U$-submodule of~$X$.

\begin{theorem}\label{nonzero} Suppose that $\ell>1$. Then $\Top(\phi)\neq (0)$ for each $\phi\in\Irr(N)$.
\end{theorem}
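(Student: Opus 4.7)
The plan is to realize the Weil representation in an explicit Schr\"odinger model on a chosen Lagrangian and, for each $\phi\in\Irr(N)$, to produce a vector in $X(\phi)$ whose support sits outside a certain submodule $\r L_2$; such a vector cannot lie in $\Bot$, so its image in $\Top$ is a nonzero element of $\Top(\phi)$. Since any two Weil representations of $\U$ of type $\lambda$ differ by a linear character of $\U$, and this merely permutes the family $\{\Top(\phi)\}_{\phi\in\Irr(N)}$, the conclusion is independent of which Weil representation one uses, so I am free to work with the simplest model available.

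I would begin by fixing a symplectic $A$-basis $u_1,\dots,u_n,v_1,\dots,v_n$ of $V$ from \cite[Proposition~2.12]{CS}, and setting
\[
L_1=Au_1\oplus\cdots\oplus Au_n,\qquad L_2=Av_1\oplus\cdots\oplus Av_n.
\]
These are complementary $R$-Lagrangians of $(V,f)$, so the Schr\"odinger representation can be modeled on $X=\C[L_2]$ by the usual formulas (character on $L_1$, translation on $L_2$). Because every $a\in N$ acts on $V$ by $A$-scalar multiplication, it preserves both $L_1$ and $L_2$, and the operator $W(a)e_{v_2}=e_{av_2}$ satisfies (\ref{defew}); the only nontrivial point in the check is the identity $f(a^{-1}v_2,u_1)=f(v_2,au_1)$, which follows from $a^{-1}=a^*$ together with the skew-hermitian behavior of $h$.

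Next, I would pin down $\Bot=X(\n V)$ in this model. Since $\n V=\n L_1\oplus\n L_2$, the requirement that $x\in\Bot$ be fixed by every $S(0,u_1)$ with $u_1\in\n L_1$ forces $\lambda(f(v_2,\n L_1))=1$ for each $v_2$ in the support of $x$; because $\ker\lambda$ contains no nonzero ideal of $R$, this collapses to $f(v_2,L_1)\subseteq\m$. Writing $v_2=\sum a_jv_j$ with $a_j=r_j+s_j\pi$, the identities $h(v_2,u_j)=-a_j^*$ and $h(v_2,\pi u_j)=-\pi a_j^*$ give $f(v_2,u_j)=-r_j$, so the condition amounts to $r_j\in\m$ for every $j$, equivalently $a_j\in\r$ for every $j$, i.e., $v_2\in\r L_2$. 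Hence every element of $\Bot$ is supported on $\r L_2$.

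Finally, take $v_2^{(0)}=v_1\in L_2$. Its $N$-stabilizer is trivial because $N\hookrightarrow A^\times$ acts faithfully on the $v_1$-coefficient, and $v_2^{(0)}\notin\r L_2$ since this coefficient is $1\in A^\times$; hence the orbit $Nv_2^{(0)}$ is free and disjoint from $\r L_2$. For any $\phi\in\Irr(N)$ the vector
\[
x_\phi=\sum_{a\in N}\phi(a)^{-1}e_{av_2^{(0)}}
\]
satisfies $W(b)x_\phi=\phi(b)x_\phi$ for every $b\in N$, so $x_\phi\in X(\phi)$; its support lies in $L_2\setminus\r L_2$, so $x_\phi\notin\Bot$, and its image is a nonzero element of $\Top(\phi)$. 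The main obstacle in the whole argument is the support characterization in the third paragraph --- recognizing $L_2\cap(\n L_1)^\perp=\r L_2$ --- which requires carefully intertwining $h$, $f$, and the involution $*$; once that is in hand, the rest is a routine orbit/Schr\"odinger computation.
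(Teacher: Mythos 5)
Your proposal is correct and follows essentially the same route as the paper's proof: both realize the Schr\"odinger model on the Lagrangian $Q=L_2$, both replace the canonical $W|_N$ by the permutation action $P(a)e_v=e_{av}$ (justified via a linear character of $N$ shuffling the eigenspaces), and both produce a nonzero vector of $\Top(\phi)$ as $\varepsilon_\phi e_v$ for $v\in Q\setminus \r Q$ using the freeness of the $N$-orbit of $v$ and the fact that $\Bot$ is supported inside $\r Q$. The only (small) economies in your version are that you derive a one-sided support bound for $\Bot$ directly from the $\n L_1$-invariance condition rather than exhibiting $\Bot$ exactly as the span of the averaged vectors $E_t$, and you fix the specific vector $v_1$ rather than an arbitrary element of $Q\setminus\r Q$.
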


\begin{proof} By \cite[Propositon 2.12]{CS}, $V$ has a basis $u_1,\dots,u_n,v_1,\dots,v_n$ satisfying (\ref{symbas}).
Let $M$ (resp. $Q$) be the $A$-span of $u_1,\dots,u_n$ (resp. $v_1,\dots,v_n$) so that $V=M\oplus Q$, where
each of $M,Q$ is a maximal totally isotropic $A$-submodule of $V$ relative to $h$. Since $|M|=\sqrt{|V|}$ and $f(M,M)=0$,
it follows from (\ref{size}) that $M$ a maximal totally isotropic $R$-submodule of $V$ relative to $f$.

Take $U=M$ to construct $X$ as in the proof of \cite[Proposition 2.1]{CMS1}. Then the $(0,v)$, $v\in Q$, form a transversal for $H(U)$ in $H(V)$,
so that
$$
X=\underset{v\in Q}\bigoplus (0,v)\C y.
$$
It follows that the $e_v=(0,v)y$, $v\in Q$, form a complex basis of $X$.  Let $T$ be a transversal for $\n Q$ in $\r Q$. For $t\in T$, consider the element $E_t$ of $X$ defined by
$$
E_t=\underset{v\in \n Q}\sum e_{t+v}.
$$
By construction, the $E_t$, $t\in T$, are linearly independent. Therefore, by Lemma~\ref{orth3}, (\ref{orth})
and Proposition \ref{clifford}, the span of all $E_t$ has dimension
$$
\begin{aligned}
|T|&=|\r Q/\n Q|=\sqrt{|\r V/\n V|}=\sqrt{|V(\m)/\n V|}\\
&=\sqrt{|(\n V)^\perp/\n V|}=\dim X(\n V)=\dim \Bot.
\end{aligned}
$$
We claim that, in fact, $Bot$ is equal to the span of all $E_t$. By above, it suffices to show that $(0,\n V)$ fixes every $E_t$.
To see this, let $w\in \n V$ and $v\in \r Q$. Then $w=w_M+w_Q$ for unique $w_M\in\n M$ and $w_Q\in\n Q$. Since $\n^2=(0)$, we have $f(w_M,w_Q)=0$. Moreover, because $\n\m=0$, we have
$$
f(w_M,v)\in\n (\r\cap R)=\n\m=(0).
$$
Therefore
$$
\begin{aligned}
(0,w)e_v &=(0,w_Q+w_M)e_v=(0,w_Q)(0,w_M)e_v=(0,w_Q)(0,w_M)(0,v)y\\
&=(0,w_Q)(0,v)(0,w_M)y=(0,w_Q)(0,v)y=(0,w_Q)e_v=e_{v+w_Q}.
\end{aligned}
$$
Applying this to any $t\in T$, we see that
$$
(0,w)E_t=E_t,
$$
as required.

Consider the representation $P:N\to\GL(X)$ given by
\begin{equation}
\label{a-1}
P(a)e_v=e_{av},\quad a\in N,v\in Q.
\end{equation}
We next claim that
\begin{equation}
\label{a0}
P(a)S(k)P(a)^{-1}=S({}^a k),\quad a\in N,k\in H(V).
\end{equation}
To verify (\ref{a0}) we resort to the following formulae, which give the action of $H(V)$ on basis vectors $e_v$, $v\in Q$:
\begin{equation}
\label{a1}
S(0,w)e_v=e_{v+w},\quad w\in Q,
\end{equation}
\begin{equation}
\label{a2}
S(0,u)e_v=\lambda(2f(u,v))e_{v},\quad u\in M,
\end{equation}
\begin{equation}
\label{a3}
S(r,0)e_v=\lambda(r)e_{v},\quad r\in R.
\end{equation}
It is easy to use (\ref{a-1}) and (\ref{a1})-(\ref{a3}) to verify (\ref{a0}). All we require is that $N$ preserves both $Q$ and $M$ (since they are $A$-submodules of $V$) and that $N$ is a subgroup of $\U$ and hence of $\Sp$.

Since $S$ is irreducible, Schur's Lemma ensures the existence of a linear character $\psi$ of $N$ such that
$$
W(a)=P(a)\psi(a),\quad a\in N.
$$
Since multiplication by $\psi$ merely shuffles the $N$-eigenspaces of $\Top$, it suffices to prove that these eigenspaces are nonzero
when $N$ acts on $X$ via $P$. For this purpose, let $Y$ be the subspace of $X$ spanned by all $e_v$, $v\in Q\setminus \r Q$.
It is clear that $Y$ is $N$-stable. Moreover, as $Bot$ was shown to be the span of all $E_t$, we see that $\Bot\cap Y=(0)$. Thus, $Y$ imbeds as an $N$-submodule of the $N$-module $\Top$.
Let $\phi\in\Irr(N)$. It suffices to show that $Y(\phi)\neq (0)$. Let $v\in Q\setminus\r Q$. By (\ref{a-1}), we have
\begin{equation}
\label{a4}
\varepsilon_\phi e_v=\underset{a\in N}\sum \phi(a^{-1})e_{av}.
\end{equation}
Since $v\in Q\setminus \r Q$, we have $cv=0\Leftrightarrow c=0$ for all $c\in A$, whence $av=bv\Leftrightarrow a=b$ for all $a,b\in N$.
As the $e_w$, $w\in Q\setminus \r Q$, are linearly independent, it follows that (\ref{a4}) is a nonzero
element of $Y(\phi)$.
\end{proof}

\begin{theorem}\label{m2} Any Weil module $X$ has $2(q^{\ell-1}+q^{\ell-2}+\cdots+1)$ irreducible constituents for $\U$, all non isomorphic to each other.
\end{theorem}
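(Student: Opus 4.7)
The plan is to prove the theorem by induction on $\ell$, using the formula $[X,X]_\U = O_\U(V) = 2(q^{\ell-1}+q^{\ell-2}+\cdots+1)$ supplied by (\ref{number}) and Lemma \ref{orbu} as the governing constraint, and then showing that the composition length of $X$ is at least this same number; since for a finite-dimensional module $X = \bigoplus_i n_i X_i$ over a semisimple algebra (the $X_i$ being pairwise non-isomorphic irreducibles) the composition length equals $\sum_i n_i$ while $[X,X]_\U = \sum_i n_i^2$, and $\sum n_i \leq \sum n_i^2$ whenever each $n_i \geq 1$, both inequalities will have to be equalities, forcing $n_i=1$ for all $i$ and the number of constituents to equal $2(q^{\ell-1}+\cdots+1)$.

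For the base case $\ell=1$ we have $A=R=F_q$ and $\U=\Sp$, so Lemma \ref{orbso} gives $[X,X]_\U=2$. Since $X$ is nonzero it has at least one irreducible constituent, and the equation $\sum n_i^2 = 2$ with $n_i \geq 1$ forces exactly two pairwise non-isomorphic constituents, each of multiplicity one, matching $2(q^0)=2$.

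For the inductive step, assume $\ell > 1$ and that the result holds for Weil modules of the corresponding unitary group in the $\ell-1$ setting. By Theorem \ref{botw}, there is a choice of Weil representation such that $\Bot$ realises a Weil module for $\overline\U$ of type $\overline\lambda$ (the case parametrised by $\ell-1$), so by the inductive hypothesis $\Bot$ has composition length exactly $2(q^{\ell-2}+\cdots+1)$. On the other hand, $\Top = \bigoplus_{\phi\in\Irr(N)} \Top(\phi)$ as $\U$-modules because $N$ is central in $\U$, and by Theorem \ref{nonzero} every summand is nonzero. Since $N$ is abelian and $|\Irr(N)|=|N|=2|A|/|R|=2q^{\ell-1}$ by (\ref{norman}), the composition length of $\Top$ is at least $2q^{\ell-1}$. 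Adding the two estimates yields
$$
\text{length}(X)=\text{length}(\Bot)+\text{length}(\Top)\geq 2(q^{\ell-2}+\cdots+1)+2q^{\ell-1}=2(q^{\ell-1}+\cdots+1),
$$
which combined with the inequality chain $\sum n_i \leq \sum n_i^2 = [X,X]_\U = 2(q^{\ell-1}+\cdots+1) \leq \text{length}(X)=\sum n_i$ forces equality throughout.

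The main technical content has already been placed in the preceding theorems, so the obstacle here is essentially bookkeeping: one must make sure that the distinct $\Top(\phi)$ do contribute to the composition length additively, which is immediate since they are $\U$-submodules that split $\Top$ as a direct sum, and that Theorem \ref{botw} genuinely identifies $\Bot$ as the appropriate Weil module so the inductive hypothesis applies verbatim. Once both are in place, the numerical squeeze between length and Hermitian inner product does all the work with no further analysis needed.
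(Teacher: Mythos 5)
Your proof is correct and follows essentially the same route as the paper's: induction on $\ell$, with $\Top$ decomposed via Theorem~\ref{nonzero} into $|N|=2q^{\ell-1}$ nonzero $N$-eigenspaces, the inductive hypothesis applied to $\Bot$ through Theorem~\ref{botw}, and the orbit count $[X,X]_\U = O_\U(V)$ from (\ref{number}) and Lemma~\ref{orbu} forcing multiplicity freeness. The only difference is that you spell out the numerical squeeze (length $\leq \sum n_i^2 = [X,X]_\U$, with a matching lower bound on length) that the paper compresses into the phrase ``it follows''.
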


\begin{proof} By induction on $\ell$. In the classical case $\ell=1$ it is well-known (see Theorem \ref{algo} below)
that $X$ has two non isomorphic irreducible constituents for $\U=\Sp$.

Suppose $\ell>1$ and the result is true for $1\leq \ell'<\ell$. By Theorem \ref{nonzero}, $\Top$ is the direct sum of
its $N$-eigenspaces, all of which are nonzero. By (\ref{norman}), there are
$$
|N|=2|A|/|R|=2q^{\ell-1}
$$
such summands. By Theorem \ref{botw}, we may assume without loss of generality
that $W$ is chosen so that the congruence subgroup $\Omega$  acts trivially on $\Bot$ and
the corresponding representation of $\U/\Omega \cong \overline{\U}$ is a Weil representation of primitive type.
By inductive hypothesis, $\Bot$ is the direct sum of $2(q^{\ell-2}+\cdots+q+1)$ of nonzero $\U$-submodules.
On the other hand, by (\ref{number}) and Lemma \ref{orbu}, we have
$$
[X,X]_\U=O_\U(V)=2(q^{\ell-1}+q^{\ell-2}+\cdots+1).
$$
It follows that the above summands of $X$ are irreducible $\U$-modules non isomorphic to each other.
\end{proof}

\begin{note} Suppose $\ell>1$. The above proof shows that the irreducible constituents of $\Top$ are the $|N|=2q^{\ell-1}$ eigenspaces for the action of $N$ and, up to multiplication by a linear character of $\U$, the remaining $2(q^{\ell-2}+\cdots+q+1)$ irreducible constituents of $X$ are the irreducible constituents of the Weil module $\Bot$ for the unitary group $\overline{\U}$, inflated to~$\U$.
\end{note}

Let $\SU$ stand for the special unitary group, namely the subgroup of~$\U$ consisting of all unitary transformations whose
determinant is equal to~1.

\begin{theorem} Suppose $n>1$. Then any Weil module $X$ has exactly $2(q^{\ell-1}+q^{\ell-2}+\cdots+1)$ irreducible constituents for $\SU$, all non isomorphic to each other. Thus, all $\U$-constituents of $X$ remain $\SU$-irreducible and non isomorphic to each other.
\end{theorem}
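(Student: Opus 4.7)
Set $N_\ell := 2(q^{\ell-1}+q^{\ell-2}+\cdots+1)$. The plan is to reduce the claim to an equality of orbit counts via (\ref{number}), and then deduce irreducibility and pairwise non-isomorphism of the $\U$-constituents directly from Theorem \ref{m2}. By that theorem, $X = \bigoplus_{i=1}^{N_\ell} X_i$ as a $\U$-module with pairwise non-isomorphic irreducible summands, and each $X_i$ is automatically $\SU$-invariant, so
$$
O_\SU(V) \;=\; [X,X]_\SU \;=\; \sum_{i,j}[X_i,X_j]_\SU \;\ge\; \sum_i [X_i,X_i]_\SU \;\ge\; N_\ell.
$$
If we can establish $O_\SU(V) \le N_\ell$, these inequalities collapse to equalities; this forces $[X_i,X_i]_\SU = 1$ and $[X_i,X_j]_\SU = 0$ for $i \ne j$, meaning that the $X_i$ remain irreducible and pairwise non-isomorphic as $\SU$-modules, which is exactly the theorem.

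Since $O_\SU(V) \ge O_\U(V) = N_\ell$ is automatic, it suffices to show that every $\U$-orbit on $V$ is already a single $\SU$-orbit. Using that $\det:\U\to N$ is surjective with kernel $\SU$, this reduces to proving $\det(\U_u) = N$ for each $u \in V$, where $\U_u$ denotes the stabilizer. Since $N$ is abelian and $\U_{gu} = g\U_u g^{-1}$, only one representative of each $\U$-orbit needs to be checked. Tracing through the proof of Lemma \ref{orbu}, every $\U$-orbit admits a representative $u$ lying in the $A$-span of $u_1,v_1$ for a fixed symplectic basis $u_1,\dots,u_n,v_1,\dots,v_n$: this is essentially Lemma \ref{ols} in the top layer $V\setminus\r V$, and follows inductively in deeper layers via the isometry $\Delta$ used in the proof of Lemma \ref{orbu}. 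Because $n>1$, the hyperbolic plane $P := Au_2 \oplus Av_2$ is $h$-orthogonal to $u$, so the subgroup $\U(P) \cong \U_2(A)$, extended by the identity on the symplectic complement of $P$ (which contains $u$), sits inside $\U_u$. A direct computation shows that $u_2 \mapsto a u_2,\ v_2 \mapsto (a^*)^{-1} v_2$ is unitary on $P$ with determinant $a/a^*\in N$ for every $a \in A^\times$, and a Hilbert~90 type argument in the spirit of \cite[Lemmas 7.4--7.5]{HS} shows the map $a\mapsto a/a^*$ surjects $A^\times$ onto $N$. Hence $\det(\U_u) = N$, completing the reduction.

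I expect the main obstacle to be the Hilbert~90 surjectivity of the norm-one map on $A^\times$ combined with the careful bookkeeping needed to guarantee that orbit representatives really do lie in $Au_1 \oplus Av_1$. The latter is implicit in the orbit analysis of \S\ref{ors} but requires tracking $\Delta$ through the induction on $\ell$. Once both are in hand, the orbit-count comparison $[X,X]_\SU = N_\ell$ delivers the theorem immediately.
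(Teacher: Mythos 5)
Your overall strategy is sound and, compared with the paper's proof, takes a somewhat more self-contained route: rather than rerunning the inductive orbit count (Lemmas \ref{ols}, \ref{igual}, \ref{orbu}) for $\SU$ and invoking the surjectivity of $\SU\to\overline{\SU}$ from \cite[Theorem 10.2]{CS2}, you show directly that every $\U$-orbit is a single $\SU$-orbit. But two of the supporting claims as written are false, and the gap needs to be addressed explicitly.

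\textbf{The Hilbert~90 claim is wrong.} The map $a\mapsto a/a^*$ from $A^\times$ to $N$ is \emph{not} surjective. Its kernel is $R^\times$, so its image has size $|A^\times|/|R^\times|=q^{\ell-1}$, which by (\ref{norman}) is exactly half of $|N|=2q^{\ell-1}$. Concretely, $-1\in N$ cannot be of the form $c/c^*$: that would force $c=-c^*\in S=R\pi\subseteq\r$, contradicting $c\in A^\times$. (This is one of the ways the ramified case differs from the unramified one: the residue extension is trivial here, so there is no genuine Hilbert~90.) In fact the image is precisely $N\cap(1+\r)$: the involution $*$ is trivial modulo $\r$, so $c/c^*\equiv 1\pmod\r$, and the sizes match.

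\textbf{Consequently the reduction should be to $\det(\U_u)\supseteq\det(\U)$, not $\det(\U_u)=N$.} To show a $\U$-orbit is an $\SU$-orbit, you need: for each $g\in\U$ with $gu=v$, there is $g'\in\SU$ with $g'u=v$, equivalently $\det(g)^{-1}\in\det(\U_u)$. So the right target is $\det(\U)\subseteq\det(\U_u)$. And indeed $\det(\U)\ne N$: reducing modulo $\r$ sends $\U$ onto $\Sp_{2n}(F_q)$, whose elements have determinant $1$, so $\det(\U)\subseteq N\cap(1+\r)$; combined with your diagonal transformations on a hyperbolic pair (whose determinants run over $\{c/c^*\}=N\cap(1+\r)$), one gets $\det(\U)=N\cap(1+\r)$ exactly. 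Once the claim is corrected to $\det(\U_u)\supseteq N\cap(1+\r)$, your construction using the hyperbolic plane $P=Au_2\oplus Av_2$ orthogonal to the chosen representative $u\in Au_1\oplus Av_1$ does the job: $\{c/c^*\}\subseteq\det(\U(P))\subseteq\det(\U_u)$, and $n>1$ is exactly what guarantees $P$ exists inside $u^\perp$. With those two fixes your argument closes, and it is a valid (arguably cleaner) alternative to the proof the paper actually gives, which instead reproves the orbit count directly for $\SU$ by verifying the $\SU$-version of \cite[Theorem 3.1]{CS} and appealing to the surjectivity of $\SU\to\overline{\SU}$.
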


\begin{proof} By induction on $\ell=1$. Suppose first $\ell=1$. Then $\U=\Sp$. Since $\Sp$ is generated symplectic transvections, every symplectic transformation must have determinant equal to~1, whence $\Sp=\SU$.  We have already noted that $X$ has two non isomorphic irreducible constituents for $\Sp$, so the result is true in this case.

Suppose $\ell>1$ and the result is true for $1\leq \ell'<\ell$. Arguing as in the proof of Theorem \ref{m2}, we are reduced to showing
\begin{equation}
\label{ful}
O_\SU(V)=2(q^{\ell-1}+q^{\ell-2}+\cdots+1).
\end{equation}

In order to verify (\ref{ful}), we claim first of all that if $u,v\in V\setminus \r V$ satisfy $h(u,u)=h(v,v)$
then $u,v$ are in the same $\SU$-orbit. Indeed, by \cite[Theorem 3.1]{CS} there is $g\in\U$
such that $gu=v$. Let $a=\det g$. We readily see that $a\in N$. Since $v\in V\setminus \r V$, there is $w\in V\setminus \r V$
such that $h(v,w)=1$. Then $V=W\perp W^\perp$, where $W$ is free of rank 2 with basis $\{v,w\}$ and $W^\perp$ is free of rank $2(n-1)$.
Since $n>1$, there is $t\in\U$ such that $t|_W=1_W$ and $t|_{W^\perp}$ is multiplication by $a^{-1}$. Then $tg\in\SU$ sends $u$ to~$v$.

Having shown that \cite[Theorem 3.1]{CS} is valid for $\SU$, it follows that Lemmas \ref{ols} and \ref{igual} are also valid for $\SU$,
that is,
$$
O_\SU(V\setminus \r V)=|S|=O_\SU(\r V\setminus \r^2 V).
$$

Next observe that the proof of Lemma \ref{orbu} hinges on Lemmas \ref{ols} and~\ref{igual} as well as on the fact that the canonical homomorphism $\U\to \overline{\U}$ is surjective. We have already noted that Lemmas \ref{ols} and \ref{igual} are true for $\SU$ and we know from \cite[Theorem 10.2]{CS2} that the canonical homomorphism $\SU\to \overline{\SU}$ is also surjective. It follows that Lemma \ref{orbu} is valid for $\SU$, i.e., (\ref{ful}) is correct.
\end{proof}

\section{Dimensions of the irreducible constituents of the $\U$-module $X$}\label{degf}

Here we find the dimensions of all irreducible components of the Weil module $X$ for $\U$.
We keep throughout the notation introduced in the proof of Theorem \ref{nonzero} and let $P:\Sp\to\GL(X)$ be a projective
representation satisfying
$$
P(g)S(k)P(g)^{-1}=S({}^g k),\quad g\in \Sp,k\in H(V),
$$
which extends the choice made in (\ref{a-1}). Let $X_+$ and $X_-$ be the eigenspaces of $P(-1_V)$ with eigenvalues $1$ and $-1$,
respectively. As shown in \cite[\S 3]{CMS1}, the subspaces $X_+$ and $X_-$ are nonzero and invariant under all $P(g)$, $g\in\Sp$.
We may thus consider the function $c:\Sp\to\C^\times$, given by
\begin{equation}
\label{cg}
c(g)=(\det P(g)|_{X_+})^{-1}(\det P(g)|_{X_-}),\quad g\in \Sp.
\end{equation}
According to \cite[Theorem 3.1]{CMS1}, the map
$W:\Sp\to\GL(X)$, given by
$$
W(g)=c(g)P(g),\quad g\in \Sp,
$$
is a Weil representation. We are particularly interested in $c(a)$, $a\in N$. Recalling the decomposition
$$
N=\{1,-1\}\times (1+\r)\cap N,
$$
we have the following values for $c(a)$, $a\in N$.

\begin{lemma} We have $c(a)=1$ if $a\in (1+\r)\cap N$ and
$$
c(-1)=(-1)^{(|Q|-1)/2}=\begin{cases} 1 & \text{ if }q^n\equiv 1\mod 4,\\
-1 & \text{ if }q^n\equiv -1\mod 4.\end{cases}
$$
\end{lemma}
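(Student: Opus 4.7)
The plan is to compute $c(a)$ directly from the explicit formula (\ref{a-1}) for $P(a)$ on the basis $\{e_v\}_{v\in Q}$ of $X$. First I will diagonalize $P(-1)$: since $P(-1)e_v=e_{-v}$ and the characteristic is odd, the only fixed point of $v\mapsto -v$ on $Q$ is $0$. Choosing a section $\tilde\Omega$ of the $\{\pm 1\}$-orbits on $Q\setminus\{0\}$, the space $X_+$ has basis $\{e_0\}\cup\{e_v+e_{-v}:v\in\tilde\Omega\}$ and $X_-$ has basis $\{e_v-e_{-v}:v\in\tilde\Omega\}$.

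Since $a\in N$ is identified with $a\cdot 1_V$, $P(a)$ permutes $\{e_v\}_{v\in Q}$ via multiplication by $a$ on $Q$ (which preserves $Q$ because $Q$ is an $A$-submodule of $V$). I will then decompose $Q\setminus\{0\}$ into $\langle a\rangle$-orbits and classify each into two types: Type I, where the orbit is $\{\pm 1\}$-stable, equivalently $a^k v=-v$ for some $k$, which forces the length to be even, say $2m$, with $a^m v=-v$; and Type II, where the orbit $C$ and its negation $-C$ are disjoint. A direct block-matrix computation, handling each orbit separately, shows that a Type II orbit of length $k$ contributes the same sign $(-1)^{k-1}$ to $\det P(a)|_{X_+}$ and to $\det P(a)|_{X_-}$, hence cancels in $c(a)$; while a Type I orbit contributes $(-1)^{m-1}$ to $\det P(a)|_{X_+}$ and $(-1)^m$ to $\det P(a)|_{X_-}$, the discrepancy arising from the identity $e_{-v}-e_v=-(e_v-e_{-v})$ which produces exactly one extra sign per Type I orbit as the cycle wraps around. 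Consequently
\[
c(a)=(-1)^{N_I(a)},
\]
where $N_I(a)$ is the number of Type I orbits of $a$ on $Q\setminus\{0\}$.

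Both assertions then follow from this formula. For $a\in(1+\r)\cap N$, a Type I orbit would require $(a^k+1)v=0$ for some $v\ne 0$ and $k\geq 1$; but $a\equiv 1\pmod{\r}$ gives $a^k+1\equiv 2\pmod{\r}$, and $2$ is a unit of $A$ (as $q$ is odd), so $a^k+1$ is a unit of $A$, forcing $v=0$. Hence $N_I(a)=0$ and $c(a)=1$. For $a=-1$, every orbit of $\langle -1\rangle$ on $Q\setminus\{0\}$ is $\{v,-v\}$, hence Type I with $m=1$, and there are $(|Q|-1)/2$ such orbits, giving $c(-1)=(-1)^{(|Q|-1)/2}$. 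The explicit case split follows from $|Q|=|A|^n=q^{(2\ell-1)n}\equiv q^n\pmod{4}$, which is immediate since $q$ is odd.

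The only substantive step is the cycle/sign bookkeeping in the second paragraph; I expect this to be the main place where care is needed. Concretely, for a Type I orbit of length $2m$, one must verify that $P(a)$ restricted to the associated block of $X_-$ has matrix equal to a cyclic permutation of length $m$ with a single entry flipped from $+1$ to $-1$, whence its determinant is $(-1)^m$ rather than $(-1)^{m-1}$. Once the formula $c(a)=(-1)^{N_I(a)}$ is established, the remainder of the argument reduces to elementary algebra in $A$.
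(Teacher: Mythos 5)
Your proof is correct, and the orbit bookkeeping (Type~I versus Type~II, with the single sign flip per Type~I cycle on the $X_-$ block) checks out; it gives the general formula $c(a)=(-1)^{N_I(a)}$ for $a\in N$. However, your route is more computational than the paper's. The paper dispenses with the first assertion in one group-theoretic stroke: since $P|_N$ is a genuine homomorphism, so is $c|_N$; since $P(a)$ acts on the bases $\{e_0\}\cup\{e_v+e_{-v}\}$ of $X_+$ and $\{e_v-e_{-v}\}$ of $X_-$ by signed permutation matrices, the determinants in the definition of $c$ are $\pm1$, so $c|_N$ lands in $\{\pm1\}$; and $(1+\r)\cap N$ has odd order $q^{\ell-1}$, hence $c$ is trivial on it. Only $c(-1)$ is then computed directly, and that case is immediate. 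What your approach buys is an explicit formula for $c(a)$ for all $a\in N$, which could be useful elsewhere; what you pay is the cycle analysis showing each Type~II orbit cancels and each Type~I orbit contributes exactly one $-1$, and the verification (which you correctly flag as the delicate point) that a $\langle a\rangle$-orbit meeting its own negation necessarily has even length $2m$ with $a^m v=-v$. The paper's argument sidesteps all of that via the odd-order observation.
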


\begin{proof} Since $P$ restricts to a group homomorphism on $N$, so does $c$. To determine the linear character $c|_N:N\to\C^\times$,
let $L$ be a subset of $Q\setminus\{0\}$ obtained by selecting exactly one element out of every subset $\{v,-v\}$ of $Q\setminus\{0\}$.
Then $e_0$ and the $e_v+e_{-v}$, $v\in L$, form a basis of $X_+$ and the $e_v-e_{-v}$, $v\in L$, form a basis of $X_-$. It follows
from (\ref{a-1}) and (\ref{cg}) that the image of $c|_N$ is contained in $\{1,-1\}\subset\C^\times$. Since $(1+\r)\cap N$ has odd
order, we deduce that $c(a)=1$ if $a\in (1+\r)\cap N$. Moreover, it is clear from (\ref{a-1}) and (\ref{cg}) that $c(-1)=(-1)^{(|Q|-1)/2}$,
as required.
\end{proof}

Let $G$ be the group of all linear characters of $(1+\r)\cap N$ and consider the following subgroups of $G$:
$$
G_i=\{\phi\in G\,|\, N\cap (1+\r^{2\ell-1-i})\subseteq \ker\phi\},\quad 0\leq i\leq 2\ell-2.
$$
Then
$$
G=G_0\supseteq G_1\supseteq\cdots\supseteq G_{2\ell-2}=\{1\}.
$$
Let $X^\pm$ denote the eigenspaces of $W(-1_V)$ with eigenvalues $\pm 1$. For $\phi\in G$,
let $X^\pm(\phi)=\{x\in X^\pm\,|\, ax=\phi(a)x\text{ for all }a\in (1+\r)\cap N\}$.

\begin{prop}\label{dimx} Let $\phi\in G$. Then $\dim X^\pm(\phi)$ is equal to
$$
\frac{q^{(2\ell-1)n}-q^{(2\ell-3)n}}{2q^{\ell-1}},\quad \phi\in G_0\setminus G_2,
$$
$$
\frac{q^{(2\ell-1)n}-q^{(2\ell-3)n}}{2q^{\ell-1}}+\frac{q^{(2\ell-3)n}-q^{(2\ell-5)n}}{2q^{\ell-2}},\quad \phi\in G_2\setminus G_4,
$$
$$
\frac{q^{(2\ell-1)n}-q^{(2\ell-3)n}}{2q^{\ell-1}}+\frac{q^{(2\ell-3)n}-q^{(2\ell-5)n}}{2q^{\ell-2}}+
\frac{q^{(2\ell-5)n}-q^{(2\ell-7)n}}{2q^{\ell-3}},\quad \phi\in G_4\setminus G_6,
$$
$$
\vdots
$$
$$
\frac{q^{(2\ell-1)n}-q^{(2\ell-3)n}}{2q^{\ell-1}}+\frac{q^{(2\ell-3)n}-q^{(2\ell-5)n}}{2q^{\ell-2}}+\cdots+
\frac{q^{3n}-q^{n}}{2q}, \quad \phi\in G_{2\ell-4}\setminus G_{2\ell-2},
$$
$$
\frac{q^{(2\ell-1)n}-q^{(2\ell-3)n}}{2q^{\ell-1}}+\cdots+
\frac{q^{3n}-q^{n}}{2 q}+\begin{cases}\frac{q^n\pm 1}{2} & \text{ if }q^n\equiv 1\mod 4,\\
\frac{q^n\mp 1}{2} & \text{ if }q^n\equiv -1\mod 4\end{cases},\quad \phi\in G_{2\ell-2}=\{1\}.
$$
\end{prop}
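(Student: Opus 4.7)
The strategy is to compute directly on the concrete model of $X$ introduced in the proof of Theorem~\ref{nonzero}, where $X$ has basis $\{e_v\}_{v\in Q}$ and the projective representation $P$ acts on $N\subseteq\U$ by $P(a)e_v = e_{av}$. Since $c\equiv 1$ on $(1+\r)\cap N$ by the preceding lemma, $W$ and $P$ have identical $(1+\r)\cap N$-eigenspaces in $X$; since $W(-1_V)=c(-1)P(-1_V)$, the $W(-1_V)$-eigenspace $X^\pm$ coincides with the $P(-1_V)$-eigenspace for the eigenvalue $\pm c(-1)$. Writing a character of $N=\{\pm 1\}\times((1+\r)\cap N)$ as a pair $(\epsilon,\phi)$, one therefore has $\dim X^\pm(\phi)=\dim X_P(\pm c(-1),\phi)$, and the task reduces to counting $P$-eigenspaces.

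Decompose $X=\bigoplus_{Nv}\C[Nv]$ over the $N$-orbits on $Q$. As $N$ is abelian, each orbit module equals $\mathrm{Ind}_{\mathrm{Stab}_N(v)}^{N}\mathbf 1$ and contains a character $(\epsilon,\phi)$ with multiplicity $1$ precisely when $(\epsilon,\phi)$ is trivial on $\mathrm{Stab}_N(v)$. Since $Q$ is a free $A$-module of rank $n$, a vector $v\in\r^j Q\setminus\r^{j+1}Q$ (for $0\le j\le 2\ell-2$) has $\mathrm{ann}_A(v)=\r^{2\ell-1-j}$, so its stabiliser in $N$ is $N_k:=N\cap(1+\r^k)$ with $k=2\ell-1-j$. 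This layer has $q^{(2\ell-1-j)n}-q^{(2\ell-2-j)n}$ elements and breaks into $(q^{(2\ell-1-j)n}-q^{(2\ell-2-j)n})|N_k|/|N|$ orbits.

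The key arithmetic input is the filtration of $N$. For $a=1+b$ with $b\in\r^k$, $k\ge 1$, one has $bb^*\in\r^{2k}\subseteq\r^{k+1}$, so $aa^*=1$ reduces modulo $\r^{k+1}$ to $b+b^*\equiv 0$, i.e.\ $b\in S+\r^{k+1}$. Since $S\cap\r^{2i}=S\cap\r^{2i+1}=Sp^i$ has order $q^{\ell-1-i}$, this forces $N_k=N_{k+1}$ when $k\ge 2$ is even and $|N_k/N_{k+1}|\le q$ when $k\ge 1$ is odd; a Hensel-type lifting that uses $2\in R^\times$ produces an explicit preimage in $N_k$ for each admissible residue class, so equality holds in the latter. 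Combined with $|N_1|=|N|/2=q^{\ell-1}$ (from $N/N_1\hookrightarrow(A/\r)^\times=F_q^\times$, using that $*$ is trivial on $A/\r$, so the image consists of square roots of $1$), this yields $|N_k|=q^{\ell-1-\lfloor k/2\rfloor}$ for $k\ge 1$. In particular $N_{2j}=N_{2j+1}$ for $j\ge 1$, so $G_{2a}=G_{2a+1}$ for $0\le a\le\ell-2$; moreover $-1\notin N_k$ for any $k\ge 1$ because $2$ is a unit, so $\epsilon$ does not constrain multiplicities in any nonzero orbit.

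Pairing the layers $j=2a$ and $j=2a+1$, whose triviality conditions $\phi\in G_{2a}$ and $\phi\in G_{2a+1}$ coincide and whose stabilisers share the common order $q^a$, their joint contribution to $\dim X_P(\epsilon,\phi)$ telescopes to $(q^{(2\ell-1-2a)n}-q^{(2\ell-3-2a)n})/(2q^{\ell-1-a})$. Summing over $0\le a\le a_0$ with $\phi\in G_{2a_0}\setminus G_{2a_0+2}$ and $a_0<\ell-1$ gives the formulas in the first $\ell-1$ cases. For $\phi=\mathbf 1\in G_{2\ell-2}$ the unpaired top layer $j=2\ell-2$ contributes $(q^n-1)/2$, and the orbit $\{0\}$ adds $+1$ to the trivial character $(+1,\mathbf 1)$ only; by the preceding lemma $c(-1)=1$ iff $q^n\equiv 1\pmod 4$, so that extra unit is attached to $X^+(\mathbf 1)$ when $q^n\equiv 1\pmod 4$ and to $X^-(\mathbf 1)$ when $q^n\equiv -1\pmod 4$, producing the $(q^n\pm 1)/2$ versus $(q^n\mp 1)/2$ dichotomy. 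The main obstacle is the filtration computation $|N_k|=q^{\ell-1-\lfloor k/2\rfloor}$, whose lifting step requires explicit construction of preimages when $k<\ell$; once this is in hand, the remainder is careful bookkeeping.
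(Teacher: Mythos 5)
Your proposal is correct and follows essentially the same route as the paper's own proof: decompose the permutation model $\C Q$ along the $\r$-adic layers of $Q$, identify the stabiliser in $N$ of a vector in $\r^jQ\setminus\r^{j+1}Q$ as $N\cap(1+\r^{2\ell-1-j})$, compute the sizes of this filtration of $N$, and count orbits; the handling of the $\pm$-eigenspaces via $c(-1)$ and the orbit $\{0\}$ matches the paper's invocation of the $\ell=1$ case. The only cosmetic differences are that the paper cites $[\mathrm{HS}, \S 7]$ for the index formula $|N\cap(1+\r)|/|N\cap(1+\i)|$ where you derive $|N_k|=q^{\ell-1-\lfloor k/2\rfloor}$ directly (your numerical constraint $|N_1|=q^{\ell-1}$ already forces equality, so the Hensel lifting you invoke is not strictly needed), and that the paper counts $(1+\r)\cap N$-orbits layer by layer and splits the $\pm$ afterwards, whereas you count full $N$-orbits and pair consecutive layers $j=2a,\,2a+1$, giving a slightly cleaner telescoping.
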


\begin{proof} By definition,
$$
|G_i|=|N\cap (1+\r)|/|N\cap (1+\r^{2\ell-1-i})|.
$$
We claim that
$$
G_0=G_1\supset G_2=G_3\supset\cdots\subset G_{2\ell-4}=G_{2\ell-3}\supset G_{2\ell-2}=\{1\},
$$
with sizes
$$
q^{\ell-1}=q^{\ell-1}>q^{\ell-2}=q^{\ell-2}>\cdots>q=q>1.
$$
Indeed, as shown in \cite[\S 7]{HS}, for every proper ideal $\i$ of $A$ we have
$$
\frac{|N\cap (1+\r)|}{|N\cap (1+\i)|}=\frac{|A|}{|R|}\frac{|R\cap\i|}{|\i|}=q^{\ell-1}\frac{|R\cap\i|}{|\i|}.
$$
Now
$$
\frac{|\r|}{|R\cap\r|}=q^{\ell-1}, \frac{|\r^2|}{|R\cap\r^2|}=q^{\ell-2}, \frac{|\r^3|}{|R\cap\r^3|}=q^{\ell-2},
$$
$$
\frac{|\r^4|}{|R\cap\r^4|}=q^{\ell-3}, \frac{|\r^5|}{|R\cap\r^5|}=q^{\ell-3},\dots.
$$
We see that, for $j\in\{1,\dots,2\ell-1\}$, we have
$$
\frac{|N\cap (1+\r)|}{|N\cap (1+\r^j)|}=\begin{cases} q^{(j-1)/2} & \text{ if }j\text{ is odd}\\
 q^{j/2} & \text{ if }j\text{ is even}.
\end{cases}
$$
Setting $j=2\ell-1-i$, for $i\in\{0,\dots,2\ell-2\}$, we obtain
$$
|G_i|=\frac{|N\cap (1+\r)|}{|N\cap (1+\r^{2\ell-1-i})|}=\begin{cases} q^{2(\ell-1)-i/2} & \text{ if }i\text{ is even},\\
 q^{2\ell-1-i/2} & \text{ if }i\text{ is odd}.
\end{cases}
$$
This explains the above sizes and equalities between the $G_i$.

The action of $N\cap (1+\r)$ on $X$ is the same as on the permutation module $\C Q$.  Now $\C Q$ has the following $N\cap (1+\r)$-invariant subspaces:
\begin{equation}
\label{insu}
\C[Q\setminus \r Q], \C[\r Q\setminus \r^2 Q],\dots, \C[\r^{2\ell-2} Q\setminus \{0\}], \C \{0\}.
\end{equation}

The key observation is that $\phi$ appears in $\C[Q \setminus \r Q]$ if and only if $\phi \in G_0$, $\C[\r Q\setminus \r^2 Q]$ if and only if $\phi\in G_1$, in $\C[\r^2 Q\setminus \r^3 Q]$ if and only if $\phi\in G_2$, and so on.   At the end, $\phi$ appears in
$\C[\r^{2\ell-2} Q\setminus \r^{2\ell-1} Q]$ if and only if $\phi\in G_{2\ell-2}=\{1\}$.  This is because the orbits in the action of $N \cap (1 + \r)$ on $\C [\r^i Q \setminus \r^{i+1} Q]$ are regular orbits for the quotient $(N \cap (1 + \r))/(N \cap (1 + \r^{2\ell-1-i})$ for $i = 0, \dots, 2\ell - 1$.   In particular, every $\phi \in G_i$ occurs with the same multiplicity in the action of $N \cap (1 + \r)$ on $\C [\r^i Q \setminus \r^{i+1} Q]$.

Since all linear characters $\phi \in G_i$ that occur in one of these $N\cap (1+\r)$-invariant subspaces occur with the same multiplicity, the dimension of the $\phi$-part of each $N\cap (1+\r)$-invariant subspace is equal to the number of orbits of $N \cap (1+\r)$.
The number of $N \cap (1 + \r)$-orbits in each subspace appearing in (\ref{insu}) is respectively equal to
$$
\begin{array}{c}
(|Q|-|\r Q|)/|G_0|, (|\r Q|-|\r^2 Q|)/|G_1|, (|\r^2 Q|-|\r^3 Q|)/|G_2|,\dots, \\
(|\r^{2\ell-2} Q|-|\r^{2\ell-1} Q|)/|G_{2\ell-2}|, 1.
\end{array}
$$
The quantities $|\r^i Q\setminus \r^{i+1} Q|$ are easy to compute, and we already computed $|G_i|$, which yields a formula for the dimension of $X(\phi)$ for each $\phi\in G$.  Any $\phi \in G$ can be extended in two ways to a linear character of $N$.  When $\phi \ne 1$, each $X(\phi)$ will be split exactly in half in this process.  In the special case of the trivial character $\phi=1$, the dimensions of $X^{\pm}(\phi)$ agree with the $\ell = 1$ case of Theorem \ref{algo}.   This gives the indicated formulae.
\end{proof}

\begin{theorem} The dimensions of the irreducible constituents of $X$ under $\U$ are
\begin{equation}
\label{dimtop}
\frac{q^{(2\ell-1)n}-q^{(2\ell-3)n}}{2q^{\ell-1}}, \frac{q^{(2\ell-3)n}-q^{(2\ell-5)n}}{2q^{\ell-2}},\dots, \frac{q^{3n}-q^{n}}{2q}, \frac{q^n\pm 1}{2}.
\end{equation}
More precisely, if $\ell=1$ these dimensions are $\frac{q^n\pm 1}{2}$, while if $\ell>1$ then the dimensions of all irreducible constituents of $\Top$ under $\U$ are all equal to
$$
\frac{\dim\Top}{|N|}=\frac{q^{(2\ell-1)n}-q^{(2\ell-3)n}}{2q^{\ell-1}},
$$
while the remaining dimensions stated in (\ref{dimtop}) correspond to the irreducible constituents of $\Bot$ under $\U$.
 \end{theorem}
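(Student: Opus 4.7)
The plan is to proceed by induction on $\ell$. In the base case $\ell=1$ we have $\U=\Sp$ and $\dim X = q^n$; Proposition \ref{dimx} in its only relevant case ($\phi=1 \in G = \{1\} = G_{2\ell-2}$) gives the two irreducible $\U$-constituents of dimensions $\frac{q^n\pm 1}{2}$.

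For the inductive step ($\ell>1$), fix a Weil representation $W$ for which Theorem \ref{botw} applies: $\Omega$ acts trivially on $\Bot$, and the induced action of $\overline{\U}$ on $\Bot$ is a Weil representation of type $\overline{\lambda}$. The inductive hypothesis applied to this Weil module (with parameter $\overline{\ell}=\ell-1$) identifies the irreducible $\overline{\U}$-constituents of $\Bot$ as having dimensions $\frac{q^{(2\ell-3)n}-q^{(2\ell-5)n}}{2q^{\ell-2}},\ldots,\frac{q^{3n}-q^n}{2q},\frac{q^n\pm 1}{2}$; each inflates to a $\U$-constituent of the same dimension, accounting for all but the first entry in (\ref{dimtop}).

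It remains to show that each of the $|N|=2q^{\ell-1}$ irreducible $\U$-constituents $\Top(\phi)$ (provided by Theorems \ref{nonzero} and \ref{m2}) has dimension $K:=\frac{q^{(2\ell-1)n}-q^{(2\ell-3)n}}{2q^{\ell-1}}$. The short exact sequence $0\to \Bot\to X\to \Top\to 0$ of $N$-modules gives $\dim \Top(\phi) = \dim X(\phi) - \dim \Bot(\phi)$. Writing $\phi=(\epsilon,\chi)$ with $\epsilon\in\{\pm 1\}$ and $\chi\in G$, Proposition \ref{dimx} expresses $\dim X(\phi)=\dim X^\epsilon(\chi)$ as a telescoping sum whose first term is $K$ and whose further terms depend on which $G_{2i}\setminus G_{2i+2}$ contains $\chi$. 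Since $N$ acts on $\Bot$ through $\overline{N}$, one has $\dim \Bot(\phi)=0$ unless $\phi$ is trivial on $N\cap(1+\r^{2\ell-3})$, i.e., $\chi\in G_2$; and for $\chi \in G_2$, applying Proposition \ref{dimx} inductively to $\overline{\U}$ together with the identification $\overline{G}_i=G_{i+2}$ (immediate from $\overline{\r}^{j}=\r^{j}/\r^{2\ell-3}$ for $j\leq 2\ell-3$) shows that $\dim \Bot(\phi)$ is exactly the tail of the telescoping sum for $\dim X(\phi)$, omitting the leading term $K$. Subtraction yields $\dim\Top(\phi)=K$ in every case.

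The main obstacle lies in the bookkeeping: one must verify that the identification $\overline{G}_i = G_{i+2}$ aligns the telescoping sums at the two levels term by term, and that the sign in the final $\frac{q^n\pm 1}{2}$ entry carries through consistently under the choice of Weil representation dictated by Theorem \ref{botw}. Once this matching is confirmed, the uniform dimension $K$ on all the $\Top(\phi)$ emerges, completing the induction.
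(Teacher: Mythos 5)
Your proposal is correct and follows essentially the same route as the paper: induction on $\ell$, the inductive hypothesis for $\Bot$ via Theorem~\ref{botw}, and the subtraction $\dim\Top^\pm(\phi)=\dim X^\pm(\phi)-\dim\Bot^\pm(\phi)$ combined with the shift $\overline{G}_i=G_{i+2}$ to produce the telescoping cancellation. The one piece of ``bookkeeping'' you flag but leave open is handled explicitly in the paper by observing that the action of $N\cap(1+\r)$ on $\Bot$ (through the basis $E_t$, $t\in T$) is the permutation action on $\C[Q/\r^{2\ell-3}Q]$, which is exactly what licenses applying Proposition~\ref{dimx} with $\ell$ replaced by $\ell-1$ and makes the term-by-term matching rigorous.
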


\begin{proof} By induction on $\ell$. The case $\ell=1$ is covered by Theorem \ref{algo}. Suppose $\ell>1$
and the result is true for $\ell-1$. Setting $\overline{A}=A/\r^{2\ell-3}$, we have the groups
$\overline{N}, \overline{G}, \overline{G}_i$, $0\leq i\leq 2\ell-4$, corresponding to $\overline{A}$.

Note that the action of $N\cap (1+\r)$ on the basis $E_{t+\n V}$, $t\in \r Q$, of $\Bot$ is like the above action but for the quotient space $Q/\r^{2\ell-3} Q\cong \r Q/\n Q$, that is, the action of $N\cap (1+\r)$ on $\Bot$ is the same as on the permutation module $\C[Q/\r^{2\ell-3} Q]$. This gives $\dim\Bot^\pm(\overline{\phi})$ for every $\overline{\phi}\in \overline{G}$ by simply replacing $\ell$ by $\ell-1$ in the formulae of Proposition \ref{dimx}.

Now a $\phi\in G_0\setminus G_2$ does not give rise to any linear character of $\overline{N}\cap (1+\overline{\r})$,
so that $\phi$ does not enter $\Bot$, and after that $G_2\setminus G_4$ corresponds with $\overline{G_0}\setminus \overline{G_2}$,
$G_4\setminus G_6$ corresponds with $\overline{G_2}\setminus \overline{G_4}$, and so on. The resulting cancellations in
$\dim \Top^\pm(\phi)=\dim X^\pm(\phi)-\dim\Bot^\pm(\phi)$ give stated dimensions
of all irreducible constituents $\Top^\pm(\phi)$ of $\Top$. The others follow by inductive hypothesis via Theorem~\ref{botw}.
\end{proof}

\section{Decomposition of $X$ into irreducible $\Sp$-modules}

The action of the central element $-1_V$ of $\Sp$ on $X$ is determined in \cite[\S 3]{CMS1}. If $\ell>1$ then $V$ is not a free $R$-module, and it is conceivable that in this case a linear character $\Sp\to\C^*$ be nontrivial on $-1_V$, thereby altering the dimensions of the eigenspaces of $-1_V$ acting on $X$. This is not the case. Indeed, let $u_1,\dots,u_n,v_1,\dots,v_n$ be an $A$-basis of $V$ satisfying (\ref{symbas}) and set
$U_1=Ru_1\oplus\cdots\oplus Ru_n\oplus R v_1\cdots\oplus Rv_n$, $U_2=R\pi u_1\oplus\cdots\oplus R\pi u_n\oplus R\pi v_1\cdots\oplus R\pi v_n$.
Then $V=U_1\perp U_2$. Moreover, the symplectic groups associated to $U_1$ and $U_2$ are isomorphic to $\Sp_{2n}(R)$ and $\Sp_{2n}(R/\n)$,
respectively. These groups are perfect, except when $(n,q)=(2,3)$, but even in this case the nontrivial central element is in
the derived subgroup, as the index of the derived subgroup in $\SL_2(R)$ and $\SL_2(R/\n)$ is 3. All in all, $-1_{U_1}$ and $-1_{U_2}$
are in the derived subgroups of $\Sp(U_1)$ and $\Sp(U_2)$, respectively, so $-1_V$ is in the derived subgroup of $\Sp$.

\begin{theorem}\label{algo} The Weil module $X$ has $2\ell$ irreducible constituents for $\Sp$, all non isomorphic to each other,
with dimensions as indicated below.
\end{theorem}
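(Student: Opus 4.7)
The plan is to run the $\Bot$/$\Top$ argument of Theorem \ref{m2}, with the $N$-eigenspace decomposition of $\Top$ replaced by the two-fold decomposition into the $\pm 1$-eigenspaces of the central element $-1_V$. Combining (\ref{number}) with Lemma \ref{orbso} I would first record that
$$
[X,X]_\Sp = O_\Sp(V) = 2\ell,
$$
so that exhibiting a filtration of $X$ by $\Sp$-submodules with $2\ell$ nonzero composition factors will suffice: the identity $\sum m_i^2 = 2\ell$ together with composition length $\geq 2\ell$ forces every multiplicity $m_i$ to equal $1$ and the number of distinct constituents to be exactly $2\ell$.

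I would then induct on $\ell$. For $\ell=1$ the ring $R=F_q$ is a field, $\Sp=\Sp_{2n}(q)$, and the classical decomposition $X=X^+\oplus X^-$ into eigenspaces of $W(-1_V)$ provides two non-isomorphic irreducible Weil constituents of dimensions $(q^n\pm 1)/2$. For $\ell>1$ the key preparatory step is an $\Sp$-analog of Theorem \ref{botw}: there is a Weil representation $W$ of $\Sp$ of type $\lambda$ through which the congruence subgroup
$$
\Omega_\Sp = \{g\in \Sp\,|\, gv\equiv v\bmod \r^{2\ell-3}V\text{ for all }v\in V\}
$$
acts trivially on $\Bot=X(\n V)$, and such that the induced representation of $\Sp/\Omega_\Sp$ is a Weil representation of $\overline{\Sp}$ of type $\overline{\lambda}$. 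The proof will be the verbatim translation of that of Theorem \ref{botw}: the same map (\ref{yus}) and the compatibility (\ref{yus2}) remain valid with $\Sp$ in place of $\U$, because the action of $\Sp$ on $H$ restricts to one on $(\m,\r V)$ preserving $(0,\n V)$, and then Schur's Lemma produces the requisite linear twist. The only new external input needed is surjectivity of the canonical reduction $\Sp(V,f)\to \Sp(\overline{V},\overline{f})$, which is the familiar Hensel-type lifting for symplectic groups over a local ring quotient with nilpotent kernel. Granted this, the inductive hypothesis provides $2(\ell-1)$ pairwise non-isomorphic irreducible $\overline{\Sp}$-constituents of $\Bot$, which inflate to the same number of pairwise non-isomorphic irreducible $\Sp$-submodules of $\Bot$.

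For the quotient $\Top=X/\Bot$ the relation $W(-1_V)^2=1_X$ gives $X=X^+\oplus X^-$, hence $\Top=\Top^+\oplus \Top^-$ as $\Sp$-modules; this decomposition is stable under every rechoice of Weil representation because the preceding paragraph shows $-1_V$ lies in the derived subgroup of $\Sp$. Both summands are nonzero by a dimension comparison: $\dim X^\pm=(q^{(2\ell-1)n}\pm 1)/2$, while by Proposition \ref{clifford} and Lemma \ref{orth4},
$$
\dim \Bot=\sqrt{|(\n V)^\perp/\n V|}=\sqrt{|\r V/\n V|}=q^{(2\ell-3)n},
$$
which for $\ell>1$ is far smaller than either of $\dim X^\pm$, so $\Bot^\pm=\Bot\cap X^\pm$ fails to exhaust $X^\pm$. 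Stringing together the $2(\ell-1)$ irreducible subfactors of $\Bot$ with the two nonzero quotients $\Top^+$ and $\Top^-$ then produces a filtration of $X$ with $2\ell$ nonzero composition factors, and the initial multiplicity bound closes the argument.

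The main obstacle will be the $\Sp$-version of Theorem \ref{botw}: checking that, with the chosen Weil representation, $\Omega_\Sp$ acts trivially on $\Bot$ and that the quotient representation really is Schr\"odinger of type $\overline{\lambda}$. Everything else is a routine combination of dimension comparisons, the orbit count of Lemma \ref{orbso}, and induction.
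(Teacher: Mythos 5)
Your overall strategy — reduce to $[X,X]_\Sp = 2\ell$ via (\ref{number}) and Lemma~\ref{orbso}, then exhibit a filtration with $2\ell$ nonzero factors — is sound and matches the paper's. But the proposed $\Sp$-analog of Theorem~\ref{botw} is where the argument breaks, and it is not a minor technicality.

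The proof of Theorem~\ref{botw} hinges on the compatibility~(\ref{yus2}), which in turn depends on the identity $g(\pi v)=\pi(g v)$, valid because $g\in\U$ is $A$-linear. For $g\in\Sp$ this fails: $g$ is only $R$-linear, and the discrepancy $g(\pi v)-\pi(gv)$ need not lie in $\r^{2\ell-2}V=\n V$. Concretely, take $\ell=2$, $n=1$, $V=Ae_1\oplus Ae_2$ with $h(e_1,e_2)=1$. The $R$-linear map $g$ determined by $g e_1=e_1+\pi e_2$, $g e_2=e_2$, $g(\pi e_1)=-\pi e_2$, $g(\pi e_2)=\pi e_1-pe_2$ preserves $f$ (one checks $f(ge_i,ge_j)=f(e_i,e_j)$, $f(ge_i,g\pi e_j)=f(e_i,\pi e_j)$ and $f(g\pi e_1,g\pi e_2)=-p$), so $g\in\Sp$, and moreover $g\in\Omega_\Sp$ since $gv\equiv v\bmod\r V=\r^{2\ell-3}V$. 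Yet
$$
g(\pi e_1)-\pi(g e_1)=-\pi e_2-(\pi e_1+pe_2)=-\pi e_1-\pi e_2-pe_2\notin\r^2V=\n V.
$$
Thus $g\in\Omega_\Sp$ acts nontrivially on the quotient $(\m,\r V)/(0,\n V)$, so $W(g)|_{\Bot}$ is not a scalar; the map~(\ref{yus}) is not $\Sp$-equivariant; and there is no linear twist of $W$ making $\Omega_\Sp$ act trivially on $\Bot$. The "verbatim translation" simply does not go through, and this is exactly the place the paper sidesteps. (Separately, surjectivity of $\Sp(V,f)\to\Sp(\overline V,\overline f)$ is not the standard Hensel lifting, since $V$ is not a free $R$-module; the paper never needs or proves it.)

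The paper's actual proof avoids all of this. For $\ell>1$ it uses the full chain of $\Sp$-invariant totally isotropic submodules $\r^\ell V\subset\r^{\ell+1}V\subset\cdots\subset\r^{2\ell-1}V=0$ (the ideals of $A$ with square zero), giving via \cite[Corollary 4.3]{CMS1} the chain $(0)\subset X(\r^\ell V)\subset\cdots\subset X(\r^{2\ell-2}V)\subset X$ of $\ell+1$ distinct $\Sp$-submodules, hence $\ell$ factor modules $Y_i$. The dimensions of the $\pm 1$-eigenspaces $Y_i^\pm$ of $-1_V$ on each $Y_i$ are computed directly from Lemma~\ref{orth4} and \cite[Lemma 4.4]{CMS1} and are nonzero, yielding $2\ell$ nonzero $\Sp$-subquotients, after which $[X,X]_\Sp=2\ell$ finishes. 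No $\overline{\Sp}$-module structure on $\Bot$, no reduction map for $\Sp$, and no induction on $\ell$ at this step. You should replace the inductive $\Bot/\Top$ scheme by this explicit filtration argument, or else give a genuinely different proof that $\Omega_\Sp$ (or some other suitable congruence subgroup) acts by scalars on $\Bot$, which the counterexample above shows cannot be done for the $\Omega_\Sp$ you propose.
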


\begin{proof} The classical case $\ell=1$ follows from (\ref{number}),
the fact that $\U=\Sp$ has two orbits on $V$, and that the eigenspaces $X^\pm$ of $-1_V$ acting on $X$ are nontrivial.
As for their dimensions, we have (see \cite[\S 3]{CMS1}, for instance)
$$
\dim X^\pm=\begin{cases}\frac{q^n\pm 1}{2} & \text{ if }q^n\equiv 1\mod 4,\\
\frac{q^n\mp 1}{2} & \text{ if }q^n\equiv -1\mod 4.\end{cases}
$$

Suppose $\ell>1$ and consider the ideals of $A$ square (0), namely the $\ell$ ideals
$$
0=\r^{2\ell-1}\subset \r^{2\ell-1}\subset\cdots\subset \r^{\ell}.
$$
It follows from Lemma \ref{orth3} that each $\r^i V$, $\ell\leq i\leq 2\ell-1$, is an $\Sp$-invariant
$R$-submodule of $V$, clearly totally isotropic with respect to $f$. By \cite[Corollary 4.3]{CMS1}
we have the following chain of $\ell+1$ \emph{distinct} $\Sp$-submodules of $X$:
$$
(0)\subset X(\r^\ell V)\subset\cdots\subset X(\r^{2\ell-2} V)\subset X(\r^{2\ell-1}V)=X,
$$
which gives rise to the $\ell$ factor modules
$$
Y_i=X(\r^{2\ell-i}V)/X(\r^{2\ell-(i+1)}V),\quad 1\leq i<\ell,
$$
and
$$
Y_\ell=X(\r^\ell V).
$$
Let $Y_i^\pm$ indicate the $\pm 1$-eigenspaces of $-1_V$ acting on $Y_i$. It follows from Lemma \ref{orth4} and \cite[Lemma 4.4]{CMS1}
that for $1\leq i<\ell$ we have
$$
\dim Y_i^\pm=\frac{\sqrt{|\r^{i-1} V/\r^{2\ell-i} V|}-\sqrt{|\r^{i} V/\r^{2\ell-(i+1)} V|}}{2}=\frac{q^{[2(\ell-i)-1]n}(q^{2n}-1)}{2}.
$$
Moreover, it follows from Lemma \ref{orth4} and the proof of \cite[Lemma 4.4]{CMS1} that
$$
\dim Y_\ell^\pm=\begin{cases}\frac{q^n\pm 1}{2} & \text{ if }q^n\equiv 1\mod 4,\\
\frac{q^n\mp 1}{2} & \text{ if }q^n\equiv -1\mod 4.\end{cases}
$$

In particular, the eigenspaces of $-1_V$ acting on each $Y_i$ are nontrivial.
Therefore, $X$ is the direct sum of $2\ell$ nonzero $\Sp$-submodules. We infer from (\ref{number})
that these modules are all irreducible and non isomorphic to each other.
\end{proof}


\end{document}